\newtheorem{theorem}{Theorem}[section]
\newtheorem{lemma}[theorem]{Lemma}
\theoremstyle{definition}
\theoremstyle{remark}
\numberwithin{equation}{section}
\def\bfa{{\mathbf a}}
\def\bfc{{\mathbf c}}
\def\bfe{{\mathbf e}} 
\def\bfh{{\mathbf h}} \def\bfH{\mathbf H}
\def\bfm{{\mathbf m}}
\def\bfn{{\mathbf n}}
\def\bfv{{\mathbf v}}
 \def\bfX{{\mathbf X}}
\def\calA{{\mathcal A}}  
\def\calB{{\mathcal B}} 
\def\calC{{\mathcal C}}
\def\calH{{\mathcal H}}
\def\calN{{\mathcal N}}
\def\calP{{\mathcal P}}
\def\calS{{\mathcal S}}
\def\dbC{{\mathbb C}}\def\dbN{{\mathbb N}}
\def\dbR{{\mathbb R}}
\def\dbZ{{\mathbb Z}}
\def\grB{{\mathfrak B}}
\def\grf{{\mathfrak f}}\def\grF{{\mathfrak F}}
\def\grg{{\mathfrak g}}\def\grG{{\mathfrak G}}
\def\grJ{{\mathfrak J}}
\def\grm{{\mathfrak m}}\def\grM{{\mathfrak M}}\def\grN{{\mathfrak N}}
\def\grn{{\mathfrak n}}\def\grS{{\mathfrak S}}\def\grP{{\mathfrak P}}
\def\grB{{\mathfrak B}}
\def\grv{{\mathfrak v}}\def\grV{{\mathfrak V}}
\def\alp{{\alpha}} \def\bfalp{{\boldsymbol \alpha}} 
\def\bfalphat{\widehat{\bfalp}}
\def\bet{{\beta}}  \def\bfbet{{\boldsymbol \beta}}
\def\gam{{\gamma}}  
 \def\bfgam{{\boldsymbol \gamma}}
\def\del{{\delta}} \def\Del{{\Delta}}
\def\tet{{\theta}} \def\bftet{{\boldsymbol \theta}} 
\def\bftethat{{\widehat \bftet}} 
\def\kap{{\kappa}}
\def\lam{{\lambda}} \def\bflam{{\boldsymbol \lam}}
\def\Lam{{\Lambda}} 
\def\bfnu{{\boldsymbol \nu}}
\def\sig{{\sigma}}  
\def\Ups{{\Upsilon}} 
\def\bfphi{{\boldsymbol \phi}} \def\bfPhi{{\boldsymbol \Phi}}
\def\ome{{\omega}} \def\Ome{{\Omega}} 
\def\d{{\partial}}
\def\eps{\varepsilon}
\def\le{\leqslant} \def\ge{\geqslant}
\def\d{{\,{\rm d}}}
\begin{document}
\title[Sums of three cubes]{Correlation estimates for sums of three cubes}
\author[J\"org Br\"udern]{J\"org Br\"udern}
\address{Mathematisches Institut, Bunsenstrasse 3--5, D-37073 G\"ottingen, Germany}
\email{bruedern@uni-math.gwdg.de}
\author[Trevor D. Wooley]{Trevor D. Wooley}
\address{School of Mathematics, University of Bristol, University Walk, Clifton, Bristol BS8 1TW, United 
Kingdom}
\email{matdw@bristol.ac.uk}
\subjclass[2010]{11D72, 11P55, 11E76}
\keywords{Sums of cubes, Hardy-Littlewood method.}
\thanks{The authors thank the Hausdorff Institute in Bonn, the Heilbronn Institute in 
Bristol, and CIRM in Luminy, for excellent working conditions conducive to writing this 
paper.}
\date{}
\begin{abstract} We establish estimates for linear correlation sums involving sums of 
three positive integral cubes. Under appropriate conditions, the underlying methods permit 
us to establish the solubility of systems of homogeneous linear equations in sums of three 
positive cubes whenever these systems have more than twice as many variables as 
equations.
\end{abstract}
\maketitle

\section{Introduction} We shall be concerned in this memoir with the number $\rho(n)$ of 
ways the natural number $n$ can be written as the sum of three positive integral cubes. 
Our principal goal is to provide upper bounds for linear correlation sums involving 
$\rho(n)$ and certain of its relatives. As an application of the underlying methods, we 
consider the solubility of systems of homogeneous linear equations in sums of three 
positive integral cubes. Provided that the system is in general position, and it has a 
solution in positive integers, we are able to show that it is soluble in sums of three positive 
cubes whenever the number of variables exceeds twice the number of equations.\par

Some notation is required before we may introduce the family of higher correlation sums 
that are central to our focus. Let $s$ and $r$ be natural numbers with $s\ge r$, and 
consider an $r\times s$ integral matrix $A=(a_{ij})$. We associate with $A$ the collection 
of linear forms
\begin{equation}\label{1.1}
\Lam_j(\bfalp)=\sum_{i=1}^ra_{ij}\alp_i\quad (1\le j\le s),
\end{equation}
and its positive cone
$$\calP=\{ \bfalp\in \dbR^r: \text{$\alp_i>0$ $(1\le i\le r)$ and $\Lam_j(\bfalp)>0$ 
$(1\le j\le s$)}\}.$$Note that $\calP$ is open, and hence its truncation 
$\calP(N)=\calP\cap [1,N]^r$ has measure $\gg N^r$ whenever $\calP$ is non-empty. 
Given an $s$-tuple $\bfh$ of non-negative integers, we may now define 
the sum $\Xi_s(N)=\Xi_s(N;A;\bfh)$ by putting
\begin{equation}\label{1.2}
\Xi_s(N;A;\bfh)=\sum_{\bfn\in \calP(N)} \rho(\Lam_1(\bfn)+h_1)\cdots 
\rho(\Lam_s(\bfn)+h_s).
\end{equation}
We refer to the coefficient matrix $A$ as being {\it highly non-singular} if all collections of 
at most $r$ of its $s$ columns are linearly independent.

\begin{theorem}\label{theorem1.1} Let $A\in \dbZ^{r\times 2r}$ be highly non-singular, 
and let $h_i\in \dbN\cup \{ 0\}$ $(1\le i\le 2r)$. Then 
$\Xi_{2r}(N;A;\bfh)\ll N^{r+1/6+\eps}$, where the constant implict in Vinogradov's 
notation depends at most on $A$ and $\eps>0$.
\end{theorem}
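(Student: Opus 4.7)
The plan is to treat $\Xi_{2r}(N)$ by the Hardy--Littlewood circle method, reducing via Cauchy--Schwarz to a correlation sum involving $\rho(m)^2$, and then invoking Vaughan's mean-value estimate for the cubic Weyl sum. Fix $P\asymp N^{1/3}$ large enough that $P^3$ dominates $\Lam_j(\bfn)+h_j$ for all $\bfn\in\calP(N)$ and all $j$, and set $f(\alpha) = \sum_{1\le x\le P} e(\alpha x^3)$. The identity $\rho(m) = \int_0^1 f(\alpha)^3 e(-\alpha m)\,d\alpha$ yields
$$\Xi_{2r}(N) = \int_{[0,1)^{2r}} \prod_{j=1}^{2r} f(\alpha_j)^3 e(-\alpha_j h_j)\,T(\bfalp)\,d\bfalp,$$
where $T(\bfalp) = \sum_{\bfn\in\calP(N)} e\bigl(-\sum_i n_i L_i(\bfalp)\bigr)$ with $L_i(\bfalp) = \sum_j a_{ij}\alpha_j$, and $|T(\bfalp)| \ll \prod_i \min(N,\|L_i(\bfalp)\|^{-1})$ by a standard Fourier completion of the polytope sum.

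Next I apply Cauchy--Schwarz to $\Xi_{2r}(N) = \sum_{\bfn} \prod_j \rho(\Lam_j(\bfn)+h_j)$ by singling out the first factor against the product of the remaining ones,
$$\Xi_{2r}(N)^2 \le \Bigl(\sum_{\bfn\in\calP(N)}\rho(\Lam_1(\bfn)+h_1)^2\Bigr)\Bigl(\sum_{\bfn\in\calP(N)}\prod_{j=2}^{2r}\rho(\Lam_j(\bfn)+h_j)^2\Bigr).$$
Since $\Lam_1$ has a nonzero coefficient (by the highly non-singular hypothesis, every column of $A$ is nonzero), a change of variables in one coordinate of $\bfn$ together with Vaughan's unconditional sixth-moment estimate $\int_0^1 |f(\alpha)|^6\,d\alpha \ll P^{7/2+\eps}$, equivalently $\sum_{m\le M}\rho(m)^2 \ll M^{7/6+\eps}$, gives
$$\sum_{\bfn\in\calP(N)}\rho(\Lam_1(\bfn)+h_1)^2 \ll N^{r-1}\sum_{m\le cN}\rho(m)^2 \ll N^{r+1/6+\eps}.$$
It therefore suffices to establish the matching bound $\Sigma' \ll N^{r+1/6+\eps}$ for the second correlation sum $\Sigma' := \sum_{\bfn\in\calP(N)} \prod_{j=2}^{2r}\rho(\Lam_j(\bfn)+h_j)^2$.

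The bulk of the work lies in controlling $\Sigma'$. The plan is to expand each factor via $\rho(m)^2 = \int_0^1 F(\delta)e(m\delta)\,d\delta$ where $F(\delta) = \sum_n\rho(n)^2 e(-n\delta)$, turning $\Sigma'$ into a $(2r-1)$-dimensional Fourier integral against a dual kernel arising from the $r\times(2r-1)$ submatrix $A'$ of $A$ obtained by deleting column~$1$. The submatrix $A'$ is itself highly non-singular since any $r$ of its columns are still columns of $A$. A Hardy--Littlewood decomposition in the dual variables $\bfdelta$ should then isolate a main term of order $N^{r+\eps}$ on the major arcs, while on the minor arcs a Vaughan-type Weyl bound $f(\alpha)\ll P^{3/4+\eps}$ applied to one of the $f$-factors supplies the $N^{1/6+\eps}$ safety margin. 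I expect the main obstacle to be the minor-arc analysis of $\Sigma'$: one must deploy the Weyl bound against just one factor while uniformly controlling the dual kernel $\tilde T(\bfdelta)$ and respecting the lattice constraints inherited from the highly non-singular structure of $A'$, so that the savings from the single Weyl factor are not eaten up by the $2r-2$ remaining $F$-factors that are merely bounded on average.
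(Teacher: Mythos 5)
There is a genuine gap, and it lies precisely where you predicted trouble: the treatment of $\Sigma'$. Your opening Cauchy--Schwarz step, which strips off the single factor $\rho(\Lam_1(\bfn)+h_1)^2$, does not reduce the difficulty of the problem at all — it trades a sum of $2r$ factors of $\rho$ for a sum of $2r-1$ factors of $\rho^2$, and the bound $\Sigma'\ll N^{r+1/6+\eps}$ you now require is at least as strong as Theorem \ref{theorem1.1} itself. Worse, the object $F(\delta)=\sum_m\rho(m)^2e(-m\delta)$ is \emph{not} a power of the Weyl sum $f$: it is a correlation $\oint f(\beta+\delta)^3\overline{f(\beta)^3}\,\d\beta$, and even the pointwise value $F(0)=\sum_m\rho(m)^2$ is only known to be $O(N^{7/6+\eps})$ unconditionally — reducing it to $N^{1+\eps}$ is the open problem addressed conditionally by Hooley and Heath--Brown, as the paper remarks. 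Consequently the ``Weyl bound $f(\alp)\ll P^{3/4+\eps}$ applied to one of the $f$-factors'' has no direct analogue for $F(\delta_j)$ on minor arcs, and there is no mean-value input available that would let you bound the product of the remaining $2r-2$ factors of $F$ against the kernel $\tilde T(\bfdelta)$ without giving back the savings; a naive H\"older attack on $\oint\prod_{j\ge2}|f(\beta_j)|^6\,\d\bfalp$ over $r-1$ integration variables produces an exponent far above $3r+\tfrac12$.

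The idea you are missing is the one the paper calls \emph{complification} (\S3). Instead of a single Cauchy--Schwarz aimed at reducing the number of $\rho$-factors, the paper applies Cauchy--Schwarz iteratively in a way that \emph{increases} the number of linear forms while preserving a very specific linked-block structure (the ``adjuvant'' and ``auxiliary'' matrices of \S\S2--3). After $m$ iterations one reaches a mean value $J(P;B_m)$ governed by an auxiliary matrix of type $(2^m,r,r)_0$, to which Lemma \ref{lemma2.4} applies and gives a near-optimal bound $\ll P^{3R-2+\eps}$; unwinding the $m$-fold Cauchy--Schwarz then yields Theorem \ref{theorem3.3}, $K_1(P;C)\ll P^{3r+1/2+\eps}$. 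The reduction in \S4 is also cleaner than your kernel-$T(\bfalp)$ formulation: using the highly non-singular hypothesis one eliminates $\bfn$ entirely, rewriting $A^T\bfn=\bfX$ as $B'\bfX=\mathbf 0$ with $B'$ highly non-singular, so that $\Xi_{2r}$ is bounded directly by $K_1(P;B)$ with no polytope kernel to control. Without the complification machinery (or some equivalent structured mean-value estimate beyond Hua's lemma), a one-pass circle-method attack of the kind you describe cannot improve on the classical bound $N^{7r/6+\eps}$ of (\ref{1.4}).
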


Classical approaches to the simplest correlation sum proceed via Cauchy's inequality. Thus, 
by utilising Hua's lemma (see \cite[Lemma 2.5]{Vau1997}), one obtains
\begin{equation}\label{1.3}
\sum_{n\le N}\rho(n)\rho(n+h)\ll \sum_{n\le N+h}\rho(n)^2\ll N^{7/6+\eps}.
\end{equation}
This traditional argument is easily generalised to handle the sum $\Xi_{2r}(N)$. Writing 
$m_j=\Lam_j(\bfn)+h_j$ for the sake of brevity, Cauchy's inequality yields
$$\Xi_{2r}(N)\le \prod_{j\in\{0,1\}}\biggl( 
\sum_{\bfn\in \calP(N)}\rho(m_{jr+1})^2\cdots \rho(m_{jr+r})^2\biggr)^{1/2}.$$
Since $\Lam_1,\Lam_2,\ldots ,\Lam_r$ are linearly independent, one may sum over the 
values $m_1,m_2,\ldots ,m_r$ as if these were independent variables. Thus, by symmetry, 
it follows as a consequence of the second inequality of (\ref{1.3}) that there is a number 
$C=C(A)\ge 1$ such that
\begin{equation}\label{1.4}
\Xi_{2r}(N)\le \biggl( \sum_{n\le CN}\rho(n)^2\biggr)^r\ll N^{7r/6+\eps}.
\end{equation}

\par The bound (\ref{1.4}) is certainly part of the folklore in the area, and constitutes the 
state of the art hitherto. It is widely believed that the upper bound $N^{7/6+\eps}$ in 
(\ref{1.3}) may be replaced by $N$, and indeed the slightly weaker estimate 
$N^{1+\eps}$ has been established by Hooley \cite{Hoo1997} and Heath-Brown 
\cite{HB1998} based on speculative hypotheses concerning the distribution of the zeros of 
certain Hasse-Weil $L$-functions. Accepting one or other of these estimates as a working 
hypothesis, one finds that $\Xi_{2r}(N)\ll N^{r+\eps}$, or even $\Xi_{2r}(N)\ll N^r$. For 
certain coefficient matrices $A$, readers will have little difficulty in convincing themselves 
that the lower bound $\Xi_{2r}(N)\gg N^r$ is to be expected. Although the bound on 
$\Xi_{2r}(N)$ presented in Theorem \ref{theorem1.1} does not improve on the classical 
estimate (\ref{1.4}) when $r=1$, for all larger values of $r$ it is substantially sharper.\par

For applications to problems of Waring's type, mollified versions of $\rho(n)$ have been 
utilised since the invention by Hardy and Littlewood \cite{HL1925} of diminishing ranges. 
Most modern innovations within this circle of ideas involve the use of sets of smooth 
numbers having positive density. Thus, given $\eta>0$, let $\rho_\eta(n)$ denote the 
number of integral solutions of the equation $n=x^3+y^3+z^3$, subject to the condition 
that none of the prime divisors of $yz$ exceed $n^{\eta/3}$. Then it follows from 
\cite{Woo1995, Woo2000} that for each $\eps>0$, there is a positive number $\eta$ such 
that
\begin{equation}\label{1.Z}
\sum_{1\le n\le N}\rho_\eta(n)^2\ll N^{1+\xi+\eps},
\end{equation}
where $\xi=(\sqrt{2833}-43)/123<1/12$. Define $\Xi_{s,\eta}(N;A;\bfh)$ as in 
(\ref{1.2}), but with $\rho_\eta$ in place of $\rho$ throughout.

\begin{theorem}\label{theorem1.2}
Let $A\in \dbZ^{r\times 2r}$ be highly non-singular, and let $h_i\in \dbN\cup \{0\}$ 
$(1\le i\le 2r)$. Then for each $\eps>0$, there is a number $\eta>0$ such that
$$\Xi_{2r,\eta}(N;A;\bfh)\ll N^{r+\xi+\eps}.$$
The constant implict in Vinogradov's notation depends at most on $A$, $\eps$ and $\eta$.
\end{theorem}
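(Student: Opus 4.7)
The plan is to adapt the proof of Theorem \ref{theorem1.1} essentially verbatim, with $\rho_\eta$ in place of $\rho$ and the smooth mean-square bound (\ref{1.Z}) replacing the classical Hua-type estimate $\sum_{n\le N}\rho(n)^2\ll N^{7/6+\eps}$. Fix $\eps>0$, and choose $\eta>0$ small enough that (\ref{1.Z}) holds for this $\eps$. As a baseline, Cauchy's inequality applied in the manner of (\ref{1.4}) yields only
$$\Xi_{2r,\eta}(N;A;\bfh)\ll N^{r+r\xi+\eps},$$
which is weaker than the asserted bound whenever $r\ge 2$; the point is that the highly non-singular structure of $A$ must be exploited to convert the loss of $r\xi$ into a single $\xi$, exactly as in Theorem \ref{theorem1.1}.

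To implement this, introduce the cube generating function $f(\alp)=\sum_{x\le(CN)^{1/3}}e(\alp x^3)$ and its smooth restriction $f_\eta(\alp)$ to $N^{\eta/3}$-smooth $x$, with $C=C(A)$ chosen so that $\Lam_j(\bfn)+h_j\le CN$ for every $\bfn\in\calP(N)$ and every $j$. Setting $F(\alp)=f(\alp)f_\eta(\alp)^2$, one has $\rho_\eta(m)=\int_0^1 F(\alp)e(-m\alp)\,\d\alp$ for $1\le m\le CN$, and hence
$$\Xi_{2r,\eta}(N;A;\bfh)=\int_{[0,1]^{2r}}\biggl(\prod_{j=1}^{2r}F(\alp_j)e(-h_j\alp_j)\biggr)K(\bfalp)\,\d\bfalp,$$
where $K(\bfalp)=\sum_{\bfn\in\calP(N)}e\bigl(-\sum_{j=1}^{2r}\alp_j\Lam_j(\bfn)\bigr)$. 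Partial summation, combined with the highly non-singular hypothesis, bounds $|K(\bfalp)|$ by a product $\prod_{i=1}^r\min(N,\|\Psi_i(\bfalp)\|^{-1})$ along the $r$ linearly independent directions $\Psi_i(\bfalp)=\sum_j a_{ij}\alp_j$.

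From here I would carry out a Hardy-Littlewood dissection of the torus $[0,1]^{2r}$. On a sufficiently narrow set of major arcs, explicit evaluation of $F$ at rationals of small denominator together with the trivial decay of $K$ delivers a contribution of order $N^{r+\eps}$. On the complementary minor arcs, one invokes the smooth Parseval identity $\int_0^1|F(\alp)|^2\,\d\alp\ll N^{1+\xi+\eps}$---the Fourier-analytic form of (\ref{1.Z})---in a single carefully chosen variable $\alp_j$, while controlling the remaining $2r-1$ integrations by trivial bounds on $F$ together with the decay of $K$. The principal obstacle is combinatorial: on each piece of the minor arcs one must identify a variable $\alp_j$ along which to deploy (\ref{1.Z}) so that only one factor of $F$ suffers the $N^\xi$ penalty and the remaining directions can be bounded sharply via $K$. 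The highly non-singular hypothesis on $A$ is precisely what guarantees that a suitable choice is always available, and one expects the combinatorial framework underpinning Theorem \ref{theorem1.1} to transfer to the present setting without essential modification.
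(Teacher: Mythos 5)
Your setup---reducing via orthogonality to an integral involving $F(\alp)=f(\alp)f_\eta(\alp)^2$ and feeding in (\ref{1.Z})---identifies the right analytic input, and you correctly observe that the naive Cauchy bound only gives $N^{r+r\xi+\eps}$. But the Hardy--Littlewood dissection of $[0,1)^{2r}$ you propose cannot reach $N^{r+\xi}$. On your minor arcs you would apply the $L^2$ bound $\int_0^1|F(\alp_j)|^2\,\d\alp_j\ll N^{1+\xi+\eps}$ in a single variable while treating the other $2r-1$ copies of $F$ trivially (each $\ll N$) and using the $L^1$-decay of $K$; but the $r$ linearly independent forms $\Psi_i$ give only $\int_{[0,1)^{2r}}|K(\bfalp)|\,\d\bfalp\ll(\log N)^r$. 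Even after Cauchy--Schwarz in $\alp_j$, this yields something of order $N^{2r-1/2+\xi/2+\eps}$, far above the target, and no choice of $j$ repairs it: the loss sits in the $2r-1$ trivial factors. Appealing to an unspecified ``combinatorial framework underpinning Theorem \ref{theorem1.1}'' is circular here, since the paper proves Theorems \ref{theorem1.1} and \ref{theorem1.2} simultaneously by the same mechanism (with $\nu_1=\tfrac12$ and $\nu_2=3\xi$ the only difference).

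The paper's actual route is structurally different, and its two essential ideas are absent from your sketch. It never introduces a kernel $K$: writing $A=(A_1,A_2)$ in $r\times r$ blocks and $\bfX=A^T\bfn$, it eliminates $\bfn$ entirely via $(A_1^{-1})^T\bfX_1=(A_2^{-1})^T\bfX_2$, so the $2r$ representation numbers $X_j=x_j^3+y_j^3+z_j^3-h_j$ satisfy $r$ linear relations with integral coefficient matrix $B$, shown by Lemma \ref{lemma4.1} to be again highly non-singular. Orthogonality and the triangle inequality then bound $\Xi_{2r,\eta}(N;A;\bfh)$ by the mean value $K_2(P;B)$ of (\ref{3.7}), an $r$-dimensional integral in the forms $\bet_j=\sum_ib_{ij}\alp_i$. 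The heart of the proof is then Theorem \ref{theorem3.3}, established by the \emph{complification} argument of \S3: repeated applications of Schwarz's inequality (Lemma \ref{lemma3.2}) double the number of constraining equations, producing adjuvant matrices of type $(2^m,r)$ so that the saving $N^{1+\xi+\eps}$ from one mean square is amortised over an exponentially growing system, and the iteration terminates by appeal to Lemma \ref{lemma2.4}, the near-optimal mean value for linked-block auxiliary matrices with column multiplicities $2$ and $4$. The elimination of $\bfn$ onto $K_2(P;B)$, and the complification/linked-block machinery that bounds it, are exactly what is missing from your proposal.
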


We turn now to systems of linear equations in sums of three cubes. Let 
$C\in \dbZ^{r\times s}$ be highly non-singular, and suppose that the system
\begin{equation}\label{1.6}
\sum_{j=1}^sc_{ij}n_j=0\quad (1\le i\le r)
\end{equation}
has a solution in positive integers $n_1,\ldots ,n_s$. Denote by $\Ups(N)$ the number of 
solutions of the system (\ref{1.6}) with $n_j\le N$ in which $n_j$ is a sum of three 
positive integral cubes. We emphasise that $\Ups(N)$ counts solutions without weighting 
them for the number of representations as the sum of three cubes.

\begin{theorem}\label{theorem1.3}
Let $C\in \dbZ^{r\times s}$ be highly non-singular, and suppose that {\rm (\ref{1.6})} 
has a solution $\bfn\in (0,\infty)^s$. Then whenever $s>2r$ and $\eps>0$, one has
$$\Ups(N)\gg N^{s(1-2\xi)-r-\eps}.$$
\end{theorem}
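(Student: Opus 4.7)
My plan is a Cauchy--Schwarz reduction to a pair of moments controlled respectively by the Hardy--Littlewood circle method and by Theorem~\ref{theorem1.2}. Choose $\eta>0$ small as afforded by Theorem~\ref{theorem1.2}, and let $\Ups_\eta(N)$ denote the number of $\bfn \in [1,N]^s \cap \dbZ^s$ satisfying $C\bfn = 0$ with $\rho_\eta(n_j) \ge 1$ for each $j$; since $\rho_\eta \le \rho$, one has $\Ups(N) \ge \Ups_\eta(N)$. Introduce the first-moment and second-moment sums
$$W_\eta(N) = \sum_{\substack{\bfn \in [1,N]^s \\ C\bfn = 0}} \prod_{j=1}^s \rho_\eta(n_j), \qquad T_\eta(N) = \sum_{\substack{\bfn \in [1,N]^s \\ C\bfn = 0}} \prod_{j=1}^s \rho_\eta(n_j)^2.$$
Cauchy's inequality gives $W_\eta(N)^2 \le \Ups_\eta(N)\, T_\eta(N)$, whence $\Ups(N) \ge W_\eta(N)^2/T_\eta(N)$. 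It therefore suffices to prove $W_\eta(N) \gg N^{s-r}$ and $T_\eta(N) \ll N^{s-r+2s\xi+\eps}$, which together yield $\Ups(N) \gg N^{s-r-2s\xi-\eps} = N^{s(1-2\xi)-r-\eps}$, as required.

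For the lower bound on $W_\eta(N)$, I would run the Hardy--Littlewood circle method. Writing $\bfc_j$ for the $j$th column of $C$, and $F_\eta(\alp) = g(\alp) h(\alp)^2$ with $g,h$ the standard exponential sums over cubes and smooth cubes of length $N^{1/3}$, one has
$$W_\eta(N) = \int_{[0,1]^r} \prod_{j=1}^s F_\eta(\bfalp^T \bfc_j)\, d\bfalp.$$
Since $s > 2r$, the dissection into major and minor arcs proceeds in the classical manner. The major arc contribution is $\gg \grS \grJ N^{s-r}$: the singular series $\grS$ is positive by virtue of integer (and hence $p$-adic, via high non-singularity of $C$) solubility of (\ref{1.6}), and the singular integral $\grJ$ is positive thanks to the hypothesised positive real solution. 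The minor arc contribution is controlled by (\ref{1.Z}) together with Weyl-type pointwise bounds for $F_\eta$, yielding an acceptable error and so $W_\eta(N) \gg N^{s-r}$.

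The upper bound on $T_\eta(N)$ is the principal technical obstacle. Parametrising $\{\bfn \in \dbZ^s : C\bfn = 0\}$ by $s-r$ free integer variables $\bfm$ (using that $C$ has rank $r$), we obtain $n_j = L_j(\bfm)$ for linear forms $L_j$ whose matrix inherits the highly non-singular property, so that
$$T_\eta(N) = \sum_{\bfm \in \calD} \prod_{j=1}^s \rho_\eta(L_j(\bfm))^2, \qquad |\calD| \ll N^{s-r}.$$
A direct invocation of Theorem~\ref{theorem1.2} (with $r$ replaced by $s-r$) is obstructed by the very hypothesis $s > 2r$, equivalent to $s < 2(s-r)$: Theorem~\ref{theorem1.2} demands $2(s-r)$ distinct highly non-singular forms, whereas we have only $s$ distinct forms $L_j$. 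I would resolve this by applying Cauchy's inequality to split the squared product $\prod_j \rho_\eta(L_j)^2$ into sums amenable to Theorem~\ref{theorem1.2} in combination with (\ref{1.Z}), producing the required bound $T_\eta(N) \ll N^{s-r+2s\xi+\eps}$; the factor $2$ in $2s\xi$ (versus the heuristically anticipated $s\xi$) reflects the loss inherent in this Cauchy--Schwarz splitting, and is precisely what produces the factor $1-2\xi$ in the exponent of the theorem.
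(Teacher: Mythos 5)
Your Cauchy--Schwarz framework diverges from the paper's argument at the decisive last step, and the divergence leaves a genuine gap. The paper never needs (and never proves) an upper bound on $T_\eta(N)$. Instead it introduces a truncation parameter $\tet=2\xi+\del$ and the exceptional set $\calS_\tet(N)=\{n\le N:\rho_\eta(n)>N^\tet\}$, uses (\ref{1.Z}) together with Markov's inequality to get $\sum_{n\in\calS_\tet(N)}\rho_\eta(n)\ll N^{1+\xi-\tet+\eps}$ (Lemma~\ref{lemma5.5}), and then bounds the contribution $Y_1$ to the first moment $W_\eta(N)=\calN(P;[0,1)^r)$ coming from solutions where some $\rho_\eta(n_j)>N^\tet$. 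This is done by fixing the offending index, summing $\rho_\eta(n)$ over $n\in\calS_\tet(N)$, and estimating the remaining $(s-1)$-form integral by Theorem~\ref{theorem3.3} applied to $2r$ of the forms plus trivial bounds for the rest, giving $Y_1\ll N^{s-r-\del/2}$. Thus $Y_0 := W_\eta(N)-Y_1\gg N^{s-r}$ comes entirely from solutions with every weight $\rho_\eta(n_j)\le N^\tet$, whence $\Ups(N)\gg Y_0\,N^{-s\tet}$, and the result follows on letting $\del\to 0$. The point of this design is precisely to avoid ever having to control a second moment of the full $s$-fold product; only a first moment and the variance estimate (\ref{1.Z}) are required.

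The gap in your proposal is the unproven claim $T_\eta(N)\ll N^{s-r+2s\xi+\eps}$. You correctly diagnose the obstruction: after parametrising by $s-r$ free variables there are only $s<2(s-r)$ distinct forms $L_j$, so no highly non-singular $(s-r)\times 2(s-r)$ matrix is available (repeating a column immediately violates high non-singularity for $r\ge 2$), and Theorem~\ref{theorem1.2} cannot be invoked. But the proposed remedy --- ``apply Cauchy's inequality to split $\prod_j\rho_\eta(L_j)^2$'' --- is not a remedy: any such split either reproduces $T_\eta$ itself, or produces fourth powers of $\rho_\eta$, or one--dimensional moments $\int_0^1|F_2|^{2p}\d\tet$ that discard the linear constraints and are far too lossy. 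Viewed through orthogonality, $T_\eta$ is a mean value $\int_{[0,1)^r}\prod_{j=1}^s|\grf_j\grg_j^2|^2\d\bfalp$, i.e.\ a Diophantine system in $6s$ cube variables subject to $s+r$ constraints; this has a different shape from anything controlled by Theorem~\ref{theorem1.2}, (\ref{1.Z}), or Theorem~\ref{theorem3.3}, and the claimed bound, while plausibly true (the major-arc heuristic gives $T_\eta\asymp N^{s-r}$), would be a substantial new theorem in its own right. So as written the proof does not close. The circle-method lower bound $W_\eta(N)\gg N^{s-r}$ you sketch is essentially the paper's Lemmata~\ref{lemma5.1}, \ref{lemma5.3} and \ref{lemma5.4} (one technical point you omit: the sums must be restricted to $\sig P<x\le P$ for a small fixed $\sig>0$ so that the singular integral is positive), and that part is sound; it is the second-moment bound that needs replacing by the paper's truncation device.
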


Were sums of three positive integral cubes to have positive density in the natural 
numbers, then one imagines that a suitable enhancement of the methods of Gowers 
\cite{Gow1998} ought to deliver the stronger conclusion $\Ups(N)\gg N^{s-r}$ for 
$s\ge r+2$. However, there seems to be no prospect of any such density result at 
present, and so one is forced to contemplate the possibility that the number of positive 
integers $n\le N$, representable as the sum of three positive integral cubes, may be as 
small as $N^{1-\xi}$. In such circumstances, even the lower bound $\Ups(N)\ge 1$ is 
highly non-trivial. Indeed, in cases where $s$ is close to $2r+1$, such a conclusion is 
established for the first time within this paper. When sums of three cubes are replaced by 
sums of two squares, on the other hand, the value set comes very close to achieving 
positive density, and the methods of Gowers are in play. In this setting, the work of 
Matthiesen \cite{Mat2012, Mat2013} comes within a factor $N^\eps$ of achieving the 
natural analogue of the above lower bound.\par

Subject to appropriate additional hypotheses, a conclusion similar to that of Theorem 
\ref{theorem1.3} may be obtained for the analogue of $\Ups(N)$ in which (\ref{1.6}) is 
replaced by an inhomogeneous system of linear equations. Note also that Balog and 
Br\"udern \cite{BB1995} consider systems of linear equations in sums of three cubes of 
special type. In the case of a single equation, their work more efficiently removes the 
multiplicity inherent in $\rho_\eta(n)$, and establishes a superior bound in this case for 
$\Ups(N)$.\par

The conclusions of this paper depend on a new mean value estimate that is of 
independent interest. In \S2 we examine systems of equations in which the coefficient 
matrices are of linked block type, and establish an auxiliary bound for their number of 
solutions. This prepares the way for the proof of the central estimate, Theorem 
\ref{theorem3.3}, in \S3, accomplished by a novel {\it complification} argument in which 
mean values are bounded by blowing up the number of equations so as to apply the 
powerful estimates of the previous section. We then establish the correlation estimates of 
Theorems \ref{theorem1.1} and \ref{theorem1.2} in \S4, and finish in \S5 by applying the 
Hardy-Littlewood method to prove Theorem \ref{theorem1.3}.

\par Our basic parameter is $P$, a sufficiently large positive number. In this paper, implicit 
constants in Vinogradov's notation $\ll$ and $\gg$ may depend on $s$, $r$ and $\eps$, as 
well as ambient coefficients. Whenever $\eps$ appears in a statement, either implicitly or 
explicitly, we assert that the statement holds for each $\eps>0$. We employ the 
convention that whenever $G:[0,1)^k\rightarrow \dbC$ is integrable, then
$$\oint G(\bfalp)\d\bfalp =\int_{[0,1)^k}G(\bfalp)\d\bfalp .$$
Here and elsewhere, we use vector notation in the natural way. Finally, we write $e(z)$ 
for $e^{2\pi iz}$.\par 

The authors are very grateful to a referee for identifying obscurities in the original 
version of this paper. In the current version, the treatment of the central mean value 
estimate in \S2, though somewhat longer, is both more explicit and considerably simpler in 
detail.

\section{Auxiliary equations} In this section we establish near-optimal mean value 
estimates for certain products of cubic Weyl sums. The formal coefficient matrices 
associated with these exponential sums have repeated columns, with multiplicities $2$ and 
$4$, and so would appear to be rather special. However, it transpires that this structure 
enables us to accommodate systems of cubic equations quite far from being in general 
position, and thus our principal conclusions are more flexible than the corresponding 
estimates of our earlier works \cite{BW2002, BW2014}.\par

We begin by describing the matrices important in our arguments. For $0\le l\le n$, 
consider natural numbers $r_l,s_l$ and  $r_l\times s_l$ matrices $C_l$ having non-zero 
columns. Let $\text{diag}(C_0,C_1,\ldots ,C_n)$ be the conventional diagonal block matrix 
with the upper left hand corner of $C_l$ sited at $(i_l,j_l)$. For $0\le l\le n$, append a row 
to the bottom of the matrix $C_l$, giving an $(r_l+1)\times s_l$ matrix $C'_l$. Next, 
consider the matrix $C^\dagger$ obtained from $\text{diag}(C_0,C_1,\ldots ,C_n)$ by 
replacing  $C_l$ by $C'_l$ for $0\le l\le n$, with the upper left hand corner of $C'_l$ still 
sited at $(i_l,j_l)$. We refer to this new matrix $C^\dagger$ as being a {\it linked block 
matrix}. It has additional entries by comparison to $\text{diag}(C_0,C_1,\ldots ,C_n)$, 
with the property that adjacent blocks are glued together by a shared row sited at index 
$i_l$, for $1\le l\le n$.\par

We next describe the special linked block matrices relevant to our discussion. Let $I_k$ 
denote the $k\times k$ identity matrix, and write ${\mathbf 0}$ for the zero row 
vector with $k$ components. We introduce the block matrices
$$I_k^*=\left( \begin{matrix}I_k\\{\mathbf 0}\end{matrix}\right) \quad \text{and}
\quad I_k^+=\left( \begin{matrix}{\mathbf 0}\\ I_k\\ {\mathbf 0}
\end{matrix}\right) .$$
When $n\ge 0$, $r\ge t\ge 2$ and $\ome\in \{0,1\}$, we consider fixed positive integers 
$\lam_l$, and matrices $M_l$ of format
$$\begin{cases} t\times (t+\ome),&\text{when $l=0$,}\\
r\times r,&\text{when $1\le l\le n$,}\end{cases}$$
having the property that every one of their square minors is non-singular. For ease of 
reference, we think of $M_l$ as the block matrix $(\bfm_l,B'_l)$, where $\bfm_l$ denotes 
the first column of $M_l$. Associated with each of these matrices, we consider the block 
matrices
$$A'_l=\begin{cases}(\lam_l I_{t-1}^*,\bfm_0),&\text{when $l=0$},\\
(\lam_l I^+_{r-2},\bfm_l),&\text{when $1\le l\le n$.}\end{cases}$$
Viewing the matrices $A'_l$ and $B'_l$ as examples of the matrices $C'_l$ introduced in 
the previous paragraph, we form the linked block matrices $A^\dagger$ and 
$B^\dagger$. We refer to the block matrix $D=(A^\dagger,B^\dagger)$ as an 
{\it auxiliary matrix of type} $(n,r,t)_\ome$, and write $D=(d_{ij})$. Put
\begin{equation}\label{2.1bw}
R=n(r-1)+t\quad \text{and}\quad S=2R-1+\ome .
\end{equation}
Then we see that $A^\dagger$ and $B^\dagger$ have respective formats $R\times R$ 
and $R\times (R-1+\ome)$, whilst $D$ has format $R\times S$.

To illustrate this definition, we note that all the square minors of the matrix
$${\tiny{\left( \arraycolsep=1.6pt\begin{array}{*{36}c}
7&5&6&3\\
7&1&4&8\\
9&4&5&7\\
6&3&3&8
\end{array}\right) }}$$
are non-singular, as the reader may care to verify, and hence\footnote{We 
adopt the convention that zero entries in a matrix are left blank.} 
$${\tiny{\left( \arraycolsep=1.6pt\begin{array}{*{36}c}
8&&&7&&&&&&&&&&5&6&3&&&&&&&&&&\\
&8&&7&&&&&&&&&&1&4&8&&&&&&&&&&\\
&&8&9&&&&&&&&&&4&5&7&&&&&&&&&&\\
&&&6&&&7&&&&&&&3&3&8&5&6&3&&\\
&&&&8&&7&&&&&&&&&&1&4&8&&&&&\\
&&&&&8&9&&&&&&&&&&4&5&7&&&&&\\
&&&&&&6&&&7&&&&&&&3&3&8&5&6&3\\
&&&&&&&8&&7&&&&&&&&&&1&4&8&&&\\
&&&&&&&&8&9&&&&&&&&&&4&5&7&&&\\
&&&&&&&&&6&&&7&&&&&&&3&3&8&5&6&3\\
&&&&&&&&&&8&&7&&&&&&&&&&1&4&8\\
&&&&&&&&&&&8&9&&&&&&&&&&4&5&7\\
&&&&&&&&&&&&6&&&&&&&&&&3&3&8
\end{array}\right) }}$$
is an auxiliary matrix of type $(3,4,4)_0$. Were one to delete the first row and column of 
this matrix, the result would be an auxiliary matrix of type $(3,4,3)_1$.\par

Next, consider an integral auxiliary matrix $D=(d_{ij})$ of type $(n,r,t)_\ome$, define 
$R$ and $S$ as in (\ref{2.1bw}), and define the linear forms
$$\gam_j=\sum_{i=1}^Rd_{ij}\alp_i\quad (1\le j\le S).$$
Introducing the Weyl sum
$$f(\alp)=\sum_{1\le x\le P}e(\alp x^3),$$
we define the mean value
\begin{equation}\label{2.2}
I_\ome (P;D)=\oint |f(\gam_1)\cdots f(\gam_R)|^2|f(\gam_{R+1})\cdots f(\gam_{S})|^4
\d\bfalp .
\end{equation}
Here, we use the suffix $\ome$ merely as an aide-memoire in keeping track of the type 
of the matrix $D$. We note in this context that by considering the underlying Diophantine 
system, one finds that $I_\ome (P;D)$ is unchanged by elementary row operations on 
$D$, and so in the discussion to come we may always pass to a convenient matrix row 
equivalent to $D$.\par

Before announcing our pivotal mean value estimate, we recall that Hua's lemma (see 
\cite[Lemma 2.5]{Vau1997}) shows that, for each natural number $c$, one has
\begin{equation}\label{2.4bw}
\int_0^1|f(c\tet)|^{2\nu}\d\tet \ll P^{\nu+\eps}\quad (\nu=1,2).
\end{equation}

\begin{lemma}\label{lemma2.4}
Let $D$ be an integral auxiliary matrix of type $(n,r,t)_\ome$ with $r\ge 3$. Then 
$I_\ome (P;D)\ll P^{3R-2+3\ome+\eps}$.
\end{lemma}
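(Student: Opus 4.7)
I would argue by induction on $n \ge 0$, peeling off the last (rightmost) block of the linked matrix $D$ at each step.

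For $n \ge 1$, the structural key is that the $r - 1$ interior rows $i_n + 1, \ldots, i_n + r - 1$ of block $n$ have non-zero entries only in block-$n$ columns --- the last $r-1$ columns of $A^\dagger$ and of $B^\dagger$. Consequently, the variables $\alpha_{i_n+1}, \ldots, \alpha_{i_n+r-1}$ appear only in block-$n$ Weyl sums, and after fixing the remaining $\alpha$-variables one has to control a block-$n$ sub-integral essentially independent of the rest. Moreover, since $A'_n = (\lambda_n I^+_{r-2}, \mathbf{m}_n)$, the first $r-2$ block-$n$ forms coming from $A^\dagger$ reduce to $\lambda_n \alpha_{i_n + k}$ for $k = 1, \ldots, r-2$ (each a function of a single interior variable), while the final $A$-form and the $r-1$ $B$-forms depend on all the block-$n$ variables through $\mathbf{m}_n$ and $B'_n$ respectively.

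To bound the sub-integral I would apply (\ref{2.4bw}) to each interior variable, using H\"older's inequality to separate the coupled quartic factors from the monomial quadratic factors as needed. The hypothesis that every square minor of $M_n$ is non-singular --- possibly after first passing to a convenient row-equivalent matrix, permissible by the remark following (\ref{2.2}) --- ensures that each Hua estimate applies to a form of the required shape $f(c\theta)$, producing a pointwise bound $\ll P^{3(r-1) + \eps}$ uniformly in the shared variable $\alpha_{i_n}$. Deleting the block-$n$ interior rows and columns from $D$ leaves a residual matrix $D'$ of type $(n-1, r, t)_\omega$, the shared row $i_n$ persisting as the appended row of block $n-1$. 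The inductive hypothesis gives $I_\omega(P; D') \ll P^{3(R - r + 1) - 2 + 3\omega + \eps}$, and combining with the sub-integral estimate yields $I_\omega(P; D) \ll P^{3R - 2 + 3\omega + \eps}$ as required. The base case $n = 0$ is handled by the same type of argument applied directly to $(A'_0, B'_0)$; the extra column of $M_0$ when $\omega = 1$ contributes the additional factor $P^3$ through one further use of (\ref{2.4bw}) on a quartic factor.

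The principal obstacle is the uniform-in-$\alpha_{i_n}$ bound for the block-$n$ sub-integral: the $r$-th $A$-form and all $r - 1$ $B$-forms acquire $\alpha_{i_n}$-dependent shifts, and the Hua estimates must survive these shifts intact. Here the hypothesis that \emph{every} square minor of $M_n$ is non-singular is what permits one to row-reduce the block so that the coupling through the shared row becomes transparent and each shifted Weyl sum still appears in a form to which (\ref{2.4bw}) is applicable. Checking that $D'$ remains a valid auxiliary matrix of type $(n-1, r, t)_\omega$ is automatic, since the matrices $M_0, \ldots, M_{n-1}$ are untouched in the reduction.
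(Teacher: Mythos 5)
Your high-level strategy (induct on the number of blocks $n$, peel off the terminal block, and use the inductive bound for the residual matrix $D'$ of type $(n-1,r,t)_\ome$) is structurally cleaner than the paper's argument, which inducts on the row count $R$ and splits into subcases $\ome=0$ and $\ome=1$. The residual-matrix bookkeeping in your sketch is also correct: deleting block $n$'s $2(r-1)$ columns and $r-1$ interior rows does leave an auxiliary matrix of type $(n-1,r,t)_\ome$. The gap lies entirely in the claimed sub-integral bound, and it is fatal.

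Your sub-integral, at fixed $\alp_{i_n}$, integrates a product of $r-1$ squared $A$-side factors and $r-1$ fourth-power $B$-side factors over $r-1$ interior variables: $6(r-1)$ copies of $|f|$ in total. The bound $P^{3(r-1)+\eps}$ you need is precisely the ``square-root-cancellation'' heuristic for this system, and it is not within reach of (\ref{2.4bw}) combined with H\"older. Take $r=3$: the sub-integral has the shape
$$\int_0^1\!\!\int_0^1 |f(\lam\alp_1)^2 f(\bet_2)^2 f(\bet_3)^4 f(\bet_4)^4|\,\d\alp_1\,\d\alp_2$$
with $\bet_2,\bet_3,\bet_4$ linear forms in $(\alp_1,\alp_2)$ whose minor structure is controlled. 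Your target is $P^{6+\eps}$. Pairing by Cauchy--Schwarz, $\le (\int\!\int|f(\lam\alp_1)^4f(\bet_3)^8|)^{1/2}(\int\!\int|f(\bet_2)^4f(\bet_4)^8|)^{1/2}$, and using (\ref{2.4bw}) together with $\int|f|^8\ll P^{5+\eps}$ after a change of variables, gives $P^{7+\eps}$, one whole power of $P$ short. Product H\"older splittings are strictly worse. Indeed, Lemma~\ref{lemma2.4} itself, applied to a single-block matrix of type $(0,r,r-1)_1$ with $R=r-1$, only gives $P^{3(r-1)+1+\eps}$ for an integral of exactly this weight, so you are asking the peeled-off block to satisfy a bound a full power of $P$ stronger than anything the lemma delivers. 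The extra factor $P^1$ then accumulates across all $n$ blocks, and the induction collapses.

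What closes this gap in the paper is nothing like a further application of Hua's lemma. In the $\ome=0$ case the paper decomposes the Diophantine count into $T_0+T_1+\cdots+T_{t-1}$ according to diagonal/off-diagonal behaviour of the first $t-1$ pairs $(x_{j1},x_{j2})$; the $T_0$ piece collapses a block after careful row reduction, while the $T_j$ pieces with $j\ge 1$ reduce to $\ome=1$ instances with $t$ decreased. The surviving subcase $t=2$, and the whole of the $\ome=1$ case, are handled by the Hardy--Littlewood method: a major/minor arc dissection (\ref{2.X}), the minor-arc bound (\ref{2.12}) (Weyl/Vaughan, giving $P^{3/4+\eps}$), and the major-arc restriction estimate (\ref{2.11}) ($\int_\grM|f|^4\ll P^{1+\eps}$). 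Your proposal uses none of these ingredients; it is precisely the pointwise arc estimates (\ref{2.11}) and (\ref{2.12}) that recover the lost power of $P$, and an argument restricted to (\ref{2.4bw}) and H\"older cannot substitute for them. You would need to re-introduce a minor-arc Weyl saving and a major-arc restriction estimate into your block-peeling step, at which point the proof would converge with the paper's.
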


\begin{proof} Throughout, we assume the nomenclature for the infrastructure of the 
matrix $D$ introduced in the preamble to this lemma. We proceed by induction on 
$R\ge 2$. Since it is supposed that $t\ge 2$, it follows from (\ref{2.1bw}) that when 
$R=2$, then $n=0$ and $t=2$. In such circumstances, one has
$$I_0(P;D)=\int_0^1\!\!\int_0^1|f(\gam_1)^2f(\gam_2)^2f(\gam_3)^4|\d \alp_1\d 
\alp_2.$$
Observe that $\gam_1=\lam\alp_1$ and, since all minors of $M_0$ are non-singular, it 
follows that $\gam_3$ is linearly independent of both $\gam_1$ and $\gam_2$. Hence, by 
applying Schwarz's inequality in combination with (\ref{2.4bw}) and a change of variables, 
one obtains
$$I_0(P;D)\ll \biggl( \int_0^1|f(\tet)|^4\d\tet \biggr)^2\ll P^{4+\eps}=P^{3R-2+\eps}.$$
Meanwhile,
$$I_1(P;D)=\int_0^1\!\!\int_0^1|f(\gam_1)^2f(\gam_2)^2f(\gam_3)^4f(\gam_4)^4|
\d \alp_1\d \alp_2.$$
Since $\gam_1=\lam \alp_1$ and all minors of $M_0$ are non-singular, we may employ 
the trivial estimate $|f(\gam_2)|=O(P)$ in combination with a change of variables to 
deduce that there are fixed positive integers $a$, $b$, $c$ and $d$ for which
$$I_1(P;D)\ll P^2\int_0^1\!\!\int_0^1|f(a\tet_1)^4f(b\tet_2)^4f(c\tet_1+d\tet_2)^2|
\d\tet_1\d\tet_2 .$$
Consequently, by a pedestrian generalisation of \cite[Theorem 1]{BW2003} (see 
especially equations (6) and (7) therein), one finds that
$$I_1(P;D)\ll P^2(P^{5+\eps})=P^{3R+1+\eps}.$$
We have thus confirmed the conclusion of the lemma when $R=2$.\par

Suppose next that $R\ge 3$, and that the conclusion of the lemma holds for all auxiliary 
matrices $D$ having fewer than $R$ rows. We divide our discussion into cases 
according to the value of $\ome$.\medskip

\noindent{\it Case I: $\ome=0$}.\vskip.0cm
\noindent We first consider the situation in which the integral auxiliary matrix $D$ has $R$ 
rows and $\ome =0$. By orthogonality, one sees that $I_0(P;D)$ counts the number of 
integral solutions of the system
\begin{equation}\label{2.5bw}
\sum_{j=1}^Rd_{ij}(x_{j1}^3-x_{j2}^3)+
\sum_{j=R+1}^{S}d_{ij}(x_{j1}^3+x_{j2}^3-x_{j3}^3-x_{j4}^3)=0\quad (1\le i\le R),
\end{equation}
with $1\le x_{jl}\le P$ for each $j$ and $l$. Let $T_0$ denote the number of these 
solutions in which $x_{j1}=x_{j2}$ for $1\le j\le t-1$, and let $T_j$ denote the 
corresponding number where instead $x_{j1}\ne x_{j2}$. Then
\begin{equation}\label{2.6bw}
I_0(P;D)\le T_0+T_1+\ldots +T_{t-1}.
\end{equation}

\par An inspection of (\ref{2.5bw}) reveals that
\begin{equation}\label{2.6abw}
T_0\ll P^{t-1}J_0,
\end{equation}
where $J_0$ counts the number of integral solutions of the system
\begin{equation}\label{2.7bw}
\sum_{j=t}^Rd_{ij}(x_{j1}^3-x_{j2}^3)+
\sum_{j=R+1}^{S}d_{ij}(x_{j1}^3+x_{j2}^3-x_{j3}^3-x_{j4}^3)=0\quad (1\le i\le R),
\end{equation}
with $1\le x_{jl}\le P$ for each $j$ and $l$. We observe that the equations in 
(\ref{2.7bw}) with $1\le i\le t-1$ involve only the variables $x_{jl}$ with $j=t$ and 
$R+1\le j\le R+t-1$. The coefficient matrix associated with these equations and variables 
is the matrix $M_0^*$ obtained from $M_0$ by deleting its final row. By taking 
appropriate linear combinations of the first $t-1$ equations of (\ref{2.7bw}), 
corresponding to elementary row operations on $M_0^*$, we may therefore replace the 
equations in (\ref{2.7bw}) with $1\le i\le t-1$ by the new equations
\begin{equation}\label{2.8bw}
u_i(x_{t,1}^3-x_{t,2}^3)+v_i(x_{R+i,1}^3+x_{R+i,2}^3-x_{R+i,3}^3-x_{R+i,4}^3)=0
\quad (1\le i\le t-1),
\end{equation}
in which $u_i$ and $v_i\ne 0$ $(1\le i\le t-1)$ are suitable integers. Put $\tau=t+r-1$. 
Then adding appropriate multiples of the equations (\ref{2.8bw}) to the equation in 
(\ref{2.7bw}) with $i=t$, one finds that the latter equation may be replaced by
\begin{equation}\label{2.9bw}
u_t(x_{t1}^3-x_{t2}^3)+d_{t\tau}(x_{\tau 1}^3-x_{\tau 2}^3)+
\sum_{j=R+t}^Sd_{tj}(x_{j1}^3+x_{j2}^3-x_{j3}^3-x_{j4}^3)=0,
\end{equation}
for a suitable rational number $u_t$. The coefficient matrix $M_0^+$ associated with 
these $t$ new equations (\ref{2.8bw}) and (\ref{2.9bw}), and variables $x_{tl}$ and 
$x_{jl}$ $(R+1\le j\le R+t-1)$ has been obtained from $M_0$ by a succession of 
elementary row operations, and hence is non-singular. Since 
$\det(M_0^+)=(-1)^{t-1}u_tv_1\cdots v_{t-1}$, we therefore see that $u_t\ne 0$.\par

We now investigate the number $N_0$ of integral solutions of the system of equations 
defined by (\ref{2.9bw}) and the equations of (\ref{2.7bw}) for which $t+1\le i\le R$, 
with $1\le x_{jl}\le P$ for each $j$ and $l$. When $n=0$, the whole system reduces to 
the single equation
$$u_t(x_{t1}^3-x_{t2}^3)=0,$$
so that $N_0\ll P$. Otherwise, when $n\ge 1$, we observe that, by taking appropriate 
non-zero integral multiples of the equations, there is no loss of generality in assuming that 
$u_t=d_{ii}$ $(t+1\le i\le t+r-2)$. In this way, one finds that $N_0=I_0(P;D_1)$, where 
$D_1$ is an auxiliary matrix of type $(n-1,r,r)_0$ having $(n-1)(r-1)+r$ rows. 
Consequently, our inductive hypothesis shows that
$$I_0(P;D_1)\ll P^{3\left( (n-1)(r-1)+r\right)-2+\eps}=P^{3(R-t)+1+\eps}.$$
Then in both cases, we have $N_0\ll P^{3(R-t)+1+\eps}$.\par

Now consider any fixed solution counted by $N_0$, and consider the number $N_1$ of 
solutions $x_{jl}$ ($R+1\le j\le R+t-1$ and $1\le l\le 4$) satisfying the equations 
(\ref{2.8bw}). Since the variables $x_{t1}$ and $x_{t2}$ are fixed, it follows from 
orthogonality via the triangle inequality that
$$N_1\ll \prod_{i=1}^{t-1}\biggl( \int_0^1|f(v_i\tet_i)|^4\d\tet_i\biggr) .$$
Then we conclude from (\ref{2.4bw}) that $N_1\ll (P^{2+\eps})^{t-1}$, and hence
$$J_0\ll (P^{2+\eps})^{t-1}N_0\ll P^{3(R-t)+2(t-1)+1+t\eps}.$$
On substituting this estimate into (\ref{2.6abw}), we obtain the bound
\begin{equation}\label{2.10bw}
T_0\ll P^{3(R-t)+3(t-1)+1+\eps}=P^{3R-2+\eps}.
\end{equation}

\par We next turn to the problem of bounding $T_j$ for $1\le j\le t-1$. We restrict 
attention in the first instance to the case $j=1$, since, as will become transparent as our 
argument unfolds, the same method applies also for the remaining values of $j$. Write
\begin{equation}\label{2.9}
T(h)=\oint |f(\gam_2)\cdots f(\gam_R)|^2
|f(\gam_{R+1})\cdots f(\gam_S)|^4e(\gam_1h)\d\bfalp .
\end{equation}
Then we find by orthogonality that
$$T_1=\sum_{h\in \dbZ\setminus \{0\}}c_hT(h),$$
where $c_h$ denotes the number of integral solutions of $d_{11}(x^3-y^3)=h$, with 
$1\le x,y\le P$. An elementary divisor function estimate shows that $c_h=O(|h|^\eps)$ 
when $h\ne 0$. Since $c_h=0$ for $|h|>P^4$, one deduces from (\ref{2.9}) and a 
consideration of the underlying Diophantine system that
\begin{equation}\label{2.10}
T_1\ll P^\eps \sum_{h\in \dbZ\setminus \{0\}}T(h).
\end{equation}
The sum over $h$ on the right hand side here is bounded above by the number $N_2$ of 
solutions of the system (\ref{2.5bw}) with $2\le i\le R$ and $x_{11}=x_{12}=0$. When 
$t\ge 3$, one sees that $N_2=I_1(P;D_2)$, where $D_2$ is the auxiliary matrix of type 
$(n,r,t-1)_1$ obtained from $D$ by deleting its first row and column. Since $D_2$ has 
$R-1$ rows, it follows from the inductive hypothesis that
$$T_1\ll P^\eps I_1(P;D_2)\ll P^{3(R-1)+1+2\eps}.$$
We therefore conclude from (\ref{2.6bw}) via (\ref{2.10bw}) that when $t\ge 3$, one has
$$I_0(P;D)\ll P^{3R-2+\eps},$$
thereby confirming the inductive hypothesis for $D$.\par

It remains to handle the situation in which $t=2$. Note that since we have assumed 
$R\ge 3$, it follows that $n\ge 1$. For the sake of concision, we abbreviate 
$(\alp_2,\ldots ,\alp_R)$ to $\bfalp'$, and then define $\gam'_j(\bfalp')=
\gam_j(0,\alp_2,\ldots ,\alp_R)$. We put
$$\grF(\alp_2)=\oint |f(\gam_3')\cdots f(\gam'_R)|^2|f(\gam'_{R+2})\cdots 
f(\gam'_S)|^4\d(\alp_3,\ldots ,\alp_R),$$
and observe that, by orthogonality, one has
$$N_2=\int_0^1|f(d_{2,2}\alp_2)^2f(d_{R+1,2}\alp_2)^4|\grF(\alp_2)\d\alp_2.$$

We apply the Hardy-Littlewood method to estimate $N_2$. Denote by $\grM$ the union of 
the intervals
\begin{equation}\label{2.X}
\grM(q,a)=\{\alp\in [0,1):|q\alp-a|\le P^{-9/4}\},
\end{equation}
with $0\le a\le q\le P^{3/4}$ and $(a,q)=1$, and put $\grm=[0,1)\setminus \grM$. Let 
$c$ be a fixed non-zero integer. Then, as a special case of \cite[Lemma 3.4]{BKW2001}, 
or as a consequence of the methods of \cite[Chapter 4]{Vau1997}), one has
\begin{equation}\label{2.11}
\int_\grM |f(c\tet)|^4\d\tet \ll P^{1+\eps}.
\end{equation}
In addition, an enhanced version of Weyl's inequality (see \cite[Lemma 1]{Vau1986}) 
shows that
\begin{equation}\label{2.12}
\sup_{\tet\in \grm}|f(c\tet)|\ll P^{3/4+\eps}.
\end{equation}

\par On the one hand, it follows from (\ref{2.12}) that
$$\int_\grm |f(d_{2,2}\alp_2)^2f(d_{R+1,2}\alp_2)^4|\grF(\alp_2)\d\alp_2\ll 
P^{3+\eps}\int_0^1|f(d_{2,2}\alp_2)^2|\grF(\alp_2)\d\alp_2.$$
Put $\tau=r+1$. Then, by orthogonality, the integral on the right hand side counts the 
number of integral solutions of the system of equations given by
$$d_{22}(x_{21}^3-x_{22}^3)+d_{2\tau}(x_{j1}^3-x_{j2}^3)+
\sum_{j=R+2}^Sd_{2j}(x_{j1}^3+x_{j2}^3-x_{j3}^3-x_{j4}^3)=0$$
and
\begin{equation}\label{2.12a}
\sum_{j=3}^Rd_{ij}(x_{j1}^3-x_{j2}^3)+\sum_{j=R+2}^Sd_{ij}
(x_{j1}^3+x_{j2}^3-x_{j3}^3-x_{j4}^3)=0\quad (3\le i\le R),
\end{equation}
with $1\le x_{jl}\le P$ for each $j$ and $l$. By taking appropriate non-zero integral 
multiples of these equations, there is no loss of generality in assuming that 
$d_{22}=d_{ii}$ $(3\le i\le r)$. The coefficient matrix $D_3$ associated with these 
equations and variables arises from $D$ by deleting its first row, and the first and 
$(R+1)$-st column, and can be seen to be an auxiliary matrix of type $(n-1,r,r)_0$ 
having $R-1$ rows. It therefore follows from the inductive hypothesis that
\begin{align}
\int_\grm |f(d_{2,2}\alp)^2f(d_{R+1,2}\alp)^4|\grF(\alp)\d\alp &\ll P^{3+\eps}
I_0(P;D_3)\notag \\
&\ll P^{3+\eps}(P^{3(R-1)-2+\eps}).\label{2.13bw}
\end{align}

\par We next consider the corresponding major arc contribution. By orthogonality, the 
mean value $\grF(\alp)$ is bounded above by the number of solutions of an associated 
Diophantine system, in which each solution is counted with a 
unimodular weight depending on $\alp$. Thus we have $\grF(\alp)\le \grF(0)$. 
Consequently, it follows from (\ref{2.11}) via the trivial estimate $|f(d_{2,2}\alp)|=O(P)$ 
that
\begin{align*}
\int_\grM |f(d_{2,2}\alp)^2f(d_{R+1,2}\alp)^4|\grF(\alp)\d\alp &\ll \grF(0)P^2
\int_\grM |f(d_{R+1,2}\alp)|^4\d\alp \\
&\ll \grF(0)P^{3+\eps}.
\end{align*}
By orthogonality, the mean value $\grF(0)$ counts the number of integral solutions of the 
system (\ref{2.12a}). The coefficient matrix $D_4$ associated with these equations and 
variables arises from $D$ by deleting its first two rows, and columns $1$, $2$ and $R+1$, 
and can be seen to be an auxiliary matrix of type $(n-1,r,r-1)_1$ having $R-2$ rows. It 
therefore follows from the inductive hypothesis that
\begin{align}
\int_\grM |f(d_{2,2}\alp)^2f(d_{R+1,2}\alp)^4|\grF(\alp)\d\alp &\ll 
P^{3+\eps}I_1(P;D_4)\notag \\
&\ll P^{3+\eps}(P^{3(R-2)+1+\eps}).\label{2.14bw}
\end{align}
On combining (\ref{2.10}), (\ref{2.13bw}) and (\ref{2.14bw}), we conclude that 
when $t=2$ one has $T_1\ll P^\eps N_2\ll P^{3R-2+2\eps}$. We therefore deduce from 
(\ref{2.6bw}) and (\ref{2.10bw}) that $I_0(P;D)\ll P^{3R-2+\eps}$, confirming the 
inductive hypothesis for $D$ when $t=2$.\medskip

\noindent {\it Case II: $\ome=1$.}\vskip.0cm
\noindent We now turn to the situation in which the integral auxiliary matrix $D$ has $R$ 
rows and $\ome =1$. Observe that $\gam_{R+j}(\bfalp)$ depends only on 
$\bfalp^*=(\alp_1,\ldots ,\alp_t)$ for $1\le j\le t$. When $1\le j\le t$, we define 
$\grB_j^*$ to be the set of $t$-tuples $\bfalp^*\in [0,1)^t$ for which 
$\gam_{R+j}(\bfalp^*,{\mathbf 0})\in \grm+\dbZ$, and we define $\grB_0^*$ to be the 
complementary set of $t$-tuples $\bfalp^*\in [0,1)^t$ for which 
$\gam_{R+j}(\bfalp^*,{\mathbf 0})\not\in \grm+\dbZ$ $(1\le j\le t)$. We then put 
$\grB_j=\grB_j^*\times [0,1)^{R-t}$ $(0\le j\le t)$. Thus 
$[0,1)^R\subseteq \grB_0\cup \grB_1\cup \ldots \cup \grB_t$. When 
$\grB\subseteq [0,1)^R$, we write
$$I(\grB)=\int_\grB|f(\gam_1)\cdots f(\gam_R)|^2|f(\gam_{R+1})\cdots f(\gam_S)|^4
\d \bfalp .$$
Then it follows from (\ref{2.2}) that
\begin{equation}\label{2.21}
I_1(P;D)\le I(\grB_0)+I(\grB_1)+\ldots +I(\grB_t).
\end{equation}

\par We begin by estimating $I(\grB_1)$. It follows from (\ref{2.12}) that
$$\sup_{\bfalp\in \grB_1}|f(\gam_{R+1})|\le \sup_{\gam\in \grm}|f(\gam)|\ll 
P^{3/4+\eps},$$
and hence
$$I(\grB_1)\ll P^{3+\eps}\oint |f(\gam_1)\cdots f(\gam_R)|^2|f(\gam_{R+2})\cdots 
f(\gam_S)|^4\d \bfalp .$$
The integral on the right hand side counts the number $N_3$ of integral solutions of the 
system (\ref{2.5bw}) with $1\le x_{jl}\le P$ for each $j\ne R+1$ and $l$, but with 
$x_{R+1,l}=0$. Thus $N_3=I(P;D_5)$, where $D_5$ is the matrix obtained from $D$ by 
deleting its $(R+1)$-st column. Note that deleting a column from a matrix, all of whose 
square minors are non-singular, does not change the latter property. Hence $D_5$ is an 
auxiliary matrix of type $(n,r,t)_0$ having $R$ rows. It therefore follows from the 
inductive hypothesis that
\begin{equation}\label{2.15bw}
I(\grB_1)\ll P^{3+\eps}I_0(P;D_5)\ll P^{3+\eps}(P^{3R-2+\eps}).
\end{equation}
As indicated earlier, a symmetrical argument shows that $I(\grB_j)$ is bounded in the 
same manner for $2\le j\le t$.\par

We finish by estimating $I(\grB_0)$. Note that whenever $\bfalp\in \grB_0$, then 
$\gam_{R+j}\in \grM+\dbZ$ for $1\le j\le t$. We put
$$\grG(\bfalp^*)=\oint |f(\gam_{t+1})\cdots f(\gam_R)|^2
|f(\gam_{R+t+1})\cdots f(\gam_S)|^4\d(\alp_{t+1},\ldots ,\alp_R),$$
and apply the trivial estimate $|f(\gam_j)|\le P$ $(1\le j\le t)$. Then one finds that
\begin{equation}\label{2.16bw}
I(\grB_0)\ll P^{2t}\int_{\grB_0^*}|f(\gam_{R+1})\cdots f(\gam_{R+t})|^4\grG
(\bfalp^*)\d \bfalp^*.
\end{equation}
Observe that by orthogonality, and an argument paralleling that in the discussion following 
(\ref{2.13bw}), 
one has $\grG(\bfalp^*)\le \grG({\mathbf 0})$. Also, one sees that 
$\grG({\mathbf 0})$ counts the number of integral solutions of the system (\ref{2.5bw}) 
for $t+1\le i\le R$, with $1\le x_{jl}\le P$ for $t+1\le j\le R$ and $R+t+1\le j\le S$, and 
with the remaining variables $0$. Thus $\grG({\mathbf 0})=I_1(P;D_6)$, where $D_6$ is 
the matrix obtained from $D$ by deleting its first $t$ rows and columns $j$ with 
$1\le j\le t$ 
and $R+1\le j\le R+t$. Hence $D_6$ is an auxiliary matrix of type $(n-1,r,r-1)_1$ having 
$R-t$ rows. It therefore follows from the inductive hypothesis that 
$\grG({\mathbf 0})\ll P^{3(R-t)+1+\eps}$. By substituting this estimate into 
(\ref{2.16bw}) and making an appropriate change of variables justified by the 
non-singularity of the matrix $B_0'$, it follows that
$$I(\grB_0)\ll P^{2t}\grG({\mathbf 0})\int_{\grM^t}|f(\tet_1)\cdots f(\tet_t)|^4\d\bftet .
$$
An application of (\ref{2.11}) therefore yields
$$I(\grB_0)\ll P^{3R-t+1+\eps}(P^{1+\eps})^t\ll P^{3R+1+(t+1)\eps}.$$
In combination with (\ref{2.15bw}), and its generalisations estimating $I(\grB_j)$ for 
$2\le j\le t$, we conclude from (\ref{2.21}) that $I_1(P;D)\ll P^{3R+1+\eps}$. This 
confirms the inductive hypothesis for $D$ when $\ome=1$, completing the proof of the 
lemma.
\end{proof}

By a modification of the argument of the proof of Lemma 
\ref{lemma2.4}, one may handle also the case $r=2$. However, we are able 
to establish all of the conclusions recorded in the introduction without appealing to this 
special case.

\section{Complification} We now employ a recursive {\it complification} argument, in 
which, at each step, mean values associated with $R$ equations are estimated in terms of 
a mean value associated with $2R-1$ equations. In this way, we are able to apply the 
estimates supplied by Lemma \ref{lemma2.4} to obtain powerful estimates for suitable 
mixed moments of order $2R$ of generating functions associated with sums of three 
cubes. We begin with a lemma concerning highly non-singular matrices.

\begin{lemma}\label{lemma4.1}
Let $A=(A_1,A_2)$ be a block matrix in which $A_1$ and $A_2$ are each of format 
$r\times r$. Then $A$ is highly non-singular if and only if $A_1$ and $A_2$ are 
non-singular, and all square minors of $A_1^{-1}A_2$ are non-singular.
\end{lemma}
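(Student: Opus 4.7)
The plan is to reduce the statement to a direct determinantal computation. First observe that since $A_1$ and $A_2$ are themselves choices of $r$ columns of $A$, the highly non-singular property of $A$ forces both to be non-singular. This disposes of one half of the claimed equivalence for the forward direction, and licenses left-multiplying $A$ by $A_1^{-1}$. Since left multiplication by an invertible matrix preserves linear (in)dependence of every collection of columns, $A$ is highly non-singular if and only if $A_1^{-1}A=(I_r,B)$ is, where $B=A_1^{-1}A_2$.

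Next I would show that it suffices to test the highly non-singular property on collections of exactly $r$ columns. Indeed, if every such collection is linearly independent, then so is every sub-collection (extend any $k<r$ candidate columns by $r-k$ further distinct columns of $A$; linear dependence of the smaller set would produce linear dependence of the extension, contradicting the $r$-column hypothesis).

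Now I would carry out the key computation. Any $r$ columns of $(I_r,B)$ split as $r-k$ columns chosen from $I_r$, indexed by some $I'\subseteq\{1,\dots,r\}$, together with $k$ columns of $B$, indexed by some $J\subseteq\{1,\dots,r\}$ with $|J|=k$. Set $I_0=\{1,\dots,r\}\setminus I'$. After permuting the rows so that the indices in $I'$ come first, and the columns so that the identity-type columns come first, the resulting $r\times r$ matrix acquires the block form
\[
\begin{pmatrix} I_{r-k} & * \\ 0 & B_{I_0,J}\end{pmatrix},
\]
where $B_{I_0,J}$ denotes the $k\times k$ submatrix of $B$ obtained by selecting the rows in $I_0$ and columns in $J$. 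Consequently the determinant of the chosen $r$-column matrix equals $\pm\det(B_{I_0,J})$, and in particular is non-zero precisely when $B_{I_0,J}$ is non-singular (with the case $k=0$ yielding $\pm 1$ automatically, and the case $k=r$ being covered by the non-singularity of $B$ itself, equivalent to that of $A_2$).

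With this computation in hand, both directions follow immediately. If $A$ is highly non-singular, then every choice of $I_0$ and $J$ with $|I_0|=|J|=k\ge 1$ produces a non-singular $k\times k$ minor of $B$, so every square minor of $B$ is non-singular. Conversely, if $A_1,A_2$ are non-singular and every square minor of $B$ is non-singular, then every $r$-subset of columns of $(I_r,B)$, and hence of $A$, is linearly independent. The only real obstacle is bookkeeping: tracking the row and column permutations and the signs carefully enough to justify the block-triangular identity above. Once that is in place, no further input is needed.
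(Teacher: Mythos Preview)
Your proposal is correct and follows essentially the same route as the paper's proof: both reduce to $(I_r,A_1^{-1}A_2)$ via left multiplication by $A_1^{-1}$, and both identify the determinant of any $r$-column selection with (up to sign) a square minor of $A_1^{-1}A_2$ via the block-triangular/Laplace expansion observation. Your version is slightly more explicit in spelling out why testing $r$-column subsets suffices and in tracking the index sets $I_0,J$, but there is no substantive difference in method.
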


\begin{proof} The non-singularity condition on $A_1$ and $A_2$ is immediate from the 
definition of what it means to be highly non-singular. Thus, by applying elementary row 
operations, it suffices to consider the situation with $A=(I_r,A_1^{-1}A_2)$. The matrix 
$A$ is highly non-singular if and only if all collections of $r$ of its columns are linearly 
independent. Given any $l\times l$ minor $M$ of $A_1^{-1}A_2$, inhabiting the columns 
$\bfv_1,\ldots ,\bfv_l$, say, one can select a complementary set of columns 
$\bfe_1,\ldots ,\bfe_{r-l}$ from $I_r$ in such a manner that
$$\det(M)=\pm \det (\bfv_1,\ldots ,\bfv_l,\bfe_1,\ldots ,\bfe_{r-l}).$$
Then all collections of $r$ of the columns of $A$ are linearly independent if and only if 
$\det(M)\ne 0$ for all square minors $M$ of $A_1^{-1}A_2$, as claimed.
\end{proof}

We next prepare the cast of generating functions needed to describe the complification 
process. With the needs of \S5 in mind, we proceed in slightly greater generality than 
demanded by the proofs of Theorems \ref{theorem1.1} and \ref{theorem1.2}. Let 
$\sig\in [0,1)$. When $2\le Z\le P$, we put
$$\calA(P,Z)=\{n\in [1,P]\cap \dbZ: \text{$p$ prime and $p|n$} \Rightarrow p\le Z\},$$
and introduce the exponential sums
\begin{equation}\label{3.Y}
f_0(\alp)=\sum_{\sig P<x\le P}e(\alp x^3)\quad \text{and}\quad g(\alp)=
\sum_{\substack{\sig P<x\le P\\ x\in \calA(P,P^\eta)}}e(\alp x^3).
\end{equation}
We then take $\phi_1(\alp)=f_0(\alp)^2$ and $\phi_2(\alp)=g(\alp)^2$, and write
\begin{equation}\label{3.1}
F_l(\alp)=f_0(\alp)\phi_l(\alp) \quad \text{and}\quad \Phi_l(\alp)=f_0(\alp)^2\phi_l(\alp)
\quad (l=1,2).
\end{equation}
Finally, for the sake of convenience, we put
\begin{equation}\label{3.2}
\nu_1=\tfrac{1}{2}\quad \text{and}\quad \nu_2=3\xi=(\sqrt{2833}-43)/41.
\end{equation}

\begin{lemma}\label{lemma3.1} When $\eta>0$ is sufficiently small, one has
$$\int_0^1|F_l(\alp)|^2\d\alp \ll P^{3+\nu_l+\eps}\quad (l=1,2).$$
\end{lemma}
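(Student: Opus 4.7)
The plan is to reduce each integral to a sum of squares of representation functions whose mean values are already recorded in the introduction. Expanding the definitions of $\phi_l$ and $F_l$ from (\ref{3.1}) yields
$$|F_1(\alp)|^2=|f_0(\alp)|^6\quad\text{and}\quad |F_2(\alp)|^2=|f_0(\alp)|^2|g(\alp)|^4,$$
so in each case it suffices to bound a pure mean value of these generating functions.

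For $l=1$, I would invoke orthogonality: $\int_0^1|f_0(\alp)|^6\d\alp$ equals the number of sextuples $(x_1,x_2,x_3,y_1,y_2,y_3)$ with all entries in $(\sig P,P]$ satisfying $x_1^3+x_2^3+x_3^3=y_1^3+y_2^3+y_3^3$. Grouping by the common value $n=x_1^3+x_2^3+x_3^3$, and observing that every solution with $\sig P<x_i,y_i\le P$ is counted by $\rho(n)^2$ since $x_i,y_i$ are positive integers, this is at most $\sum_{n\le 3P^3}\rho(n)^2$. The classical estimate $\sum_{n\le N}\rho(n)^2\ll N^{7/6+\eps}$ recalled in (\ref{1.3}), applied with $N=3P^3$, delivers $\ll P^{7/2+\eps}=P^{3+\nu_1+\eps}$, as required.

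For $l=2$, the same orthogonality argument identifies $\int_0^1|f_0(\alp)|^2|g(\alp)|^4\d\alp$ with the number of integral solutions of $x_1^3+y_1^3+z_1^3=x_2^3+y_2^3+z_2^3$ in which all six variables lie in $(\sig P,P]$ and $y_1,y_2,z_1,z_2$ lie in $\calA(P,P^\eta)$. Writing $r_g(n)$ for the number of representations $n=x^3+y^3+z^3$ subject to these restrictions, the integral is bounded by $\sum_{n\le 3P^3} r_g(n)^2$. I would then compare $r_g(n)$ to $\rho_\eta(n)$: any solution counted by $r_g(n)$ has $y,z$ with all prime divisors at most $P^\eta$, while $\rho_\eta(n)$ permits prime divisors up to $n^{\eta/3}$. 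Since every such $n$ satisfies $n\ge (\sig P)^3$, we have $n^{\eta/3}\ge \sig^\eta P^\eta$, so after a harmless shrinkage of $\eta$ (permitted by the fact that (\ref{1.Z}) holds for every sufficiently small positive $\eta$), we secure $r_g(n)\le \rho_\eta(n)$. Applying (\ref{1.Z}) with $N=3P^3$ then gives $\int|F_2|^2\d\alp \ll P^{3(1+\xi)+\eps}=P^{3+\nu_2+\eps}$.

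The only substantive point is the matching of the two notions of smoothness — that of $g$, defined relative to $P$, versus that of $\rho_\eta$, defined relative to $n$ — and this is handled by the slack coming from $n\ge (\sig P)^3$ together with the flexibility in choosing $\eta$. Beyond this bookkeeping, no idea is needed beyond direct citation of the mean value estimates (\ref{1.3}) and (\ref{1.Z}) already quoted.
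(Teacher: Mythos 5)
Your treatment of $l=1$ is essentially the paper's: the paper cites Hua's lemma combined with Schwarz's inequality, while you invoke the already-derived consequence $\sum_{n\le N}\rho(n)^2\ll N^{7/6+\eps}$ from (\ref{1.3}); these are the same estimate in two guises, and your counting reduction is correct.

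For $l=2$ there is a genuine gap. The paper appeals directly to \cite[Theorem 1.2]{Woo2000}, which bounds the mean value $\int_0^1|f(\alp)^2g(\alp)^4|\d\alp$ with the smoothness cutoff taken \emph{relative to $P$}, exactly as in the definition of $g$. You instead route through $\rho_\eta(n)$ and (\ref{1.Z}), where the cutoff is $n^{\eta/3}$, i.e.\ relative to $n$. The inequality $r_g(n)\le\rho_\eta(n)$ that you need requires $P^\eta\le n^{\eta/3}$, which fails whenever $n<P^3$ — and much of the range $n\le 3P^3$ lies below $P^3$. Your proposed remedy, a ``harmless shrinkage of $\eta$'', does not repair this: replacing $\eta$ by a smaller value scales $P^\eta$ and $n^{\eta/3}$ in lockstep, so the ratio $P^\eta/n^{\eta/3}=(P^3/n)^{\eta/3}$ stays $\ge 1$ for $n<P^3$; and when $\sig=0$ (the case needed in \S4), $n$ can be as small as $3$, so $n^{\eta/3}$ is $O(1)$ while $P^\eta\to\infty$. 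A correct version of your detour must use \emph{two distinct} parameters: take $\eta$ in the definition of $g$ strictly smaller than the $\eta'$ for which (\ref{1.Z}) holds, so that $P^\eta\le n^{\eta'/3}$ whenever $n\ge P^{3\eta/\eta'}$, giving $r_g(n)\le\rho_{\eta'}(n)$ on that range; then bound the contribution of the small $n<P^{3\eta/\eta'}$ by $\sum\rho(n)^2\ll P^{(7/2)(\eta/\eta')+\eps}$ via (\ref{1.3}), which is acceptable once $\eta/\eta'$ is chosen small enough. As written, your single-parameter argument does not establish the comparison you claim, so the $l=2$ case is not proved.
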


\begin{proof} When $l=1$, this is an immediate consequence of Hua's lemma (see 
\cite[Lemma 2.5]{Vau1997}) in combination with Schwarz's inequality. When $l=2$, 
meanwhile, this follows from \cite[Theorem 1.2]{Woo2000} by considering the 
underlying Diophantine equations.\end{proof}

Next, let $n$ and $r$ be non-negative integers with $r\ge 2$, and write $R=n(r-1)$. Let 
$\Lam=(\lam_{i,j})$ be an integral $(R+1)\times (2R+2)$ matrix, write $\bflam_j$ for the 
column vector $(\lam_{i,j})_{1\le i\le R+1}$, and define $\bflam_j^\flat$ to be the 
column vector $(\lam_{R+2-i,j})_{1\le i\le R+1}$ in which the entries of $\bflam_j$ are 
flipped upside-down. Also, let
\begin{equation}\label{3.3}
\bet_j(\bfalp)=\sum_{i=1}^{R+1}\lam_{i,j}\alp_i\quad (0\le j\le 2R+1).
\end{equation}
We say that the matrix $\Lam$ is {\it adjuvant of type} $(n,r)$ when the column 
vectors $\bflam_0,\bflam_1,\ldots ,\bflam_R$ and 
$\bflam_{R+2},\bflam_{R+3},\ldots ,\bflam_{2R+1}$, respectively, may be permuted to 
form matrices $A^\dagger$ and $B^\dagger$ having the property that the block matrix 
$(A^\dagger,B^\dagger)$ is auxiliary of type $(n-1,r,r)_0$, and also the same property 
holds for the respective column vectors 
$\bflam^\flat_{R+1},\bflam^\flat_R,\ldots ,\bflam^\flat_1$ and 
$\bflam^\flat_{2R+1},\bflam^\flat_{2R},\ldots ,\bflam^\flat_{R+2}$. We also adopt 
the convention that
$$\bfphi^{(l)}_{a,b}(\bfbet)=\prod_{j=a}^b\phi_l(\bet_j)\quad \text{and}\quad 
\bfPhi^{(l)}_{a,b}(\bfbet)=\prod_{j=a}^b\Phi_l(\bet_j).$$
We then introduce the mean value
\begin{equation}\label{3.4bw}
J_l(P;\Lam)=\oint |F_l(\bet_0)\bfphi^{(l)}_{1,R}(\bfbet)F_l(\bet_{R+1})
\bfPhi^{(l)}_{R+2,2R+1}(\bfbet)|\d\bfalp .
\end{equation}
Finally, we fix $\eta>0$ to be sufficiently small in the context of Lemma \ref{lemma3.1}.

\begin{lemma}\label{lemma3.2} Suppose that $\Lam$ is an integral adjuvant matrix of 
type $(n,r)$. Then there exists an integral adjuvant matrix $\Lam^*$ of type $(2n,r)$ for 
which
$$J_l(P;\Lam)\ll (P^{3+\nu_l+\eps})^{1/2}J_l(P;\Lam^*)^{1/2}\quad (l=1,2).$$
\end{lemma}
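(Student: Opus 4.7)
The approach is to apply Cauchy--Schwarz to $J_l(P;\Lam)$ so as to peel off one $F_l$-factor, whose $L^2$-norm is bounded by $P^{3+\nu_l+\eps}$ via Lemma~\ref{lemma3.1}, and then to recast the squared residual mean value as $J_l(P;\Lam^*)$ for a suitably constructed adjuvant matrix $\Lam^*$ of type $(2n,r)$.

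First, I would split the integrand of (\ref{3.4bw}) as $|F_l(\bet_0)|\cdot |\bfphi^{(l)}_{1,R}(\bfbet)F_l(\bet_{R+1})\bfPhi^{(l)}_{R+2,2R+1}(\bfbet)|$ and apply Cauchy--Schwarz to obtain
$$J_l(P;\Lam)^2\le \Bigl(\oint|F_l(\bet_0)|^2\d\bfalp\Bigr)\Bigl(\oint|\bfphi^{(l)}_{1,R}(\bfbet)F_l(\bet_{R+1})\bfPhi^{(l)}_{R+2,2R+1}(\bfbet)|^2\d\bfalp\Bigr).$$
Since $\bflam_0$ is a column of an auxiliary matrix and hence non-zero, a change of variables together with periodicity identifies the first factor with $\int_0^1|F_l(\tet)|^2\d\tet\ll P^{3+\nu_l+\eps}$, contributing $(P^{3+\nu_l+\eps})^{1/2}$ to the claimed estimate. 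For the second factor, the identities $|\phi_l(\bet)|^2=\phi_l(\bet)\phi_l(-\bet)$, $|F_l(\bet)|^2=F_l(\bet)F_l(-\bet)$ and $|\Phi_l(\bet)|^2=\Phi_l(\bet)\Phi_l(-\bet)$ (valid as $f_0$ and $g$ have real Fourier coefficients) transform the squared integrand into a product of $4R+2$ Weyl-sum factors evaluated at the signed forms $\pm\bet_j$.

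The heart of the argument is then to realise this squared residual, via an affine change of variables introducing $R$ auxiliary integration parameters, as a $(2R+1)$-dimensional mean value coinciding with $J_l(P;\Lam^*)$ for an explicit $(2R+1)\times(4R+2)$ matrix $\Lam^*$. The construction of $\Lam^*$ draws on both adjuvant structures of $\Lam$ simultaneously: one half of its columns arises from the forward structure on $\bflam_0,\ldots,\bflam_R$ together with $\bflam_{R+2},\ldots,\bflam_{2R+1}$, whilst the other half arises from the flipped structure on $\bflam^\flat_{R+1},\ldots,\bflam^\flat_1$ together with $\bflam^\flat_{2R+1},\ldots,\bflam^\flat_{R+2}$. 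Both of these sets of columns are auxiliary of type $(n-1,r,r)_0$ by hypothesis, and a linked-block assembly in the spirit of \S2 splices them into an auxiliary matrix of type $(2n-1,r,r)_0$, which in turn certifies $\Lam^*$ as adjuvant of type $(2n,r)$.

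The principal obstacle is the verification that this doubled combinatorial data genuinely yields an adjuvant matrix: one must ensure that all of the square minors required by the definitions in \S2 remain non-singular after the splicing, and that the two halves interlock correctly at their shared rows. It is precisely the flip condition in the definition of \emph{adjuvant} that makes this possible, since it places independently at our disposal a second auxiliary-matrix structure that can serve as the right half of $\Lam^*$. Granting this structural step, orthogonality combined with the change of variables exhibits the residual mean value as $J_l(P;\Lam^*)$, and the lemma follows.
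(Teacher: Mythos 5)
Your proposal applies Cauchy--Schwarz over the entire $(R+1)$-dimensional torus, writing
$$J_l(P;\Lam)^2\le \Bigl(\oint|F_l(\bet_0)|^2\d\bfalp\Bigr)\Bigl(\oint|\bfphi^{(l)}_{1,R}(\bfbet)F_l(\bet_{R+1})\bfPhi^{(l)}_{R+2,2R+1}(\bfbet)|^2\d\bfalp\Bigr),$$
and then claims that the second factor, an integral over $[0,1)^{R+1}$, can be recast by ``an affine change of variables introducing $R$ auxiliary integration parameters'' as the $(2R+1)$-dimensional mean value $J_l(P;\Lam^*)$. This step cannot work. A change of variables preserves dimension; it cannot create $R$ new integration parameters. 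More concretely, by orthogonality the second factor counts solutions of a Diophantine system with $R+1$ equations (the forms $\pm\bet_j$ being proportional), whereas $J_l(P;\Lam^*)$ with $\Lam^*$ of format $(2R+1)\times(4R+2)$ counts solutions of a system with $2R+1$ independent equations. Both systems have $12R+6$ variables, so the one with fewer constraints is expected to have strictly more solutions. The required inequality would then go the wrong way; the two quantities are genuinely different objects.

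The missing idea is that Cauchy--Schwarz must be applied in a \emph{single} integration variable, namely $\alp_{R+1}$, on which $\bet_{R+1}$ depends exclusively by the adjuvant structure (one may suppose $\bet_{R+1}=\lam_{R+1,R+1}\alp_{R+1}$ with $\lam_{R+1,R+1}\ne 0$). Writing
$$J_l(P;\Lam)=\int_0^1|F_l(\bet_{R+1})|\biggl(\oint|F_l(\bet_0)\bfphi^{(l)}_{1,R}(\bfbet)\bfPhi^{(l)}_{R+2,2R+1}(\bfbet)|\d\widehat{\bfalp}_R\biggr)\d\alp_{R+1}$$
and applying Cauchy--Schwarz only in $\alp_{R+1}$, the second factor becomes
$$T(P;\Lam)=\int_0^1\biggl(\oint|F_l(\bet_0)\bfphi^{(l)}_{1,R}(\bfbet)\bfPhi^{(l)}_{R+2,2R+1}(\bfbet)|\d\widehat{\bfalp}_R\biggr)^2\d\alp_{R+1}.$$
Expanding this \emph{inner} square genuinely introduces a second, independent copy of $\widehat{\bfalp}_R=(\alp_1,\ldots,\alp_R)$, giving a $(2R+1)$-dimensional integral of $4R+2$ Weyl-sum factors. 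Renaming the duplicate variables $(\alp_{2R+1},\ldots,\alp_{R+2})$ in reversed order and invoking the flip condition in the definition of adjuvant then exhibits the resulting coefficient matrix $\Lam^*$ as adjuvant of type $(2n,r)$, whence $T(P;\Lam)=J_l(P;\Lam^*)$. Your observations about the role of the flip condition and of Lemma~\ref{lemma3.1} are sound, but the Cauchy--Schwarz step that should implement the doubling is misplaced, and without it the argument cannot be completed.
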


\begin{proof} We fix $l\in \{1,2\}$, and for the sake of concision suppress mention of $l$ 
in our notation. Define the linear forms $\bet_j$ as in (\ref{3.3}). Since the matrix $\Lam$ 
is adjuvant, one may suppose that $\bet_{R+1}=\lam_{R+1,R+1}\alp_{R+1}$ with 
$\lam_{R+1,R+1}\ne 0$. Define 
$$T(P;\Lam)=\int_0^1\left( \oint |F(\bet_0)\bfphi_{1,R}(\bfbet)\bfPhi_{R+2,2R+1}
(\bfbet)|\d{\widehat \bfalp}_R\right)^2\d\alp_{R+1},$$
where ${\widehat \bfalp}_R$ denotes $(\alp_1,\ldots,\alp_R)$. Then 
Schwarz's inequality conveys us from (\ref{3.4bw}) to the bound
\begin{equation}\label{3.5}
J(P;\Lam)\le \biggl( \int_0^1|F(\bet_{R+1})|^2\d\alp_{R+1}\biggr)^{1/2}
T(P;\Lam)^{1/2}.
\end{equation}
Define $\bet_j^*=\bet_j^*(\bfalphat_{2R+1})$ by
$$\bet_j^*=\begin{cases}
\bet_j(\alp_1,\ldots ,\alp_{R+1}),&\text{when $0\le j\le R$,}\\
\bet_{2R+1-j}(\alp_{2R+1},\ldots ,\alp_{R+1}),&\text{when $R+1\le j\le 2R+1$,}\\
\bet_{j-R}(\alp_1,\ldots ,\alp_{R+1}),&\text{when $2R+2\le j\le 3R+1$,}\\
\bet_{5R+3-j}(\alp_{2R+1},\ldots ,\alp_{R+1}),&\text{when $3R+2\le j\le 4R+1$.}
\end{cases}$$
Then, by expanding the square inside the outermost integration, we see that
$$T(P;\Lam)=\oint |F(\bet_0^*)\bfphi_{1,2R}(\bfbet^*)
F(\bet_{2R+1}^*)\bfPhi_{2R+2,4R+1}(\bfbet^*)|\d{\widehat \bfalp}_{2R+1}.$$
The integral $(2R+1)\times (4R+2)$ matrix $\Lam^*=(\lam_{ij}^*)$ defining the linear 
forms $\bet_0^*,\ldots ,\bet_{4R+1}^*$ is adjuvant of type $(2n,r)$.\par

Write $\Lam^*$ in block form $(A^*,B^*)$ with $A^*$ and $B^*$ having $2R+2$ and 
$2R$ columns, respectively. It may be illuminating to note that one may permute the 
columns of the matrix $B^*$ to form a linked block matrix $(B^*)^\dagger$ built from 
two blocks, with upper left hand block $B$ and lower right hand block $B^\flat$, in which 
$B^\flat$ denotes the matrix $B$ rotated through $180^\circ$. Likewise, one sees that 
the columns of the matrix $A^*$ may be permuted to form a linked block matrix 
$(A^*)^\dagger$ built in similar manner, but with upper left hand block $A_1$, where 
$A_1$ denotes the matrix $A$ with final column deleted, and with lower right hand block 
$A_1^\flat$, in the sense described.

Thus we conclude that $T(P;\Lam)=J(P;\Lam^*)$. The conclusion of the lemma therefore 
follows from (\ref{3.5}), since Lemma \ref{lemma3.1} supplies the estimate
$$\int_0^1 |F(\bet_{R+1})|^2\d\alp_{R+1}\ll P^{3+\nu+\eps}.$$
\end{proof}

Consider an $r\times 2r$ integral matrix $C=(c_{ij})$, write $\bfc_j$ for the column 
vector $(c_{ij})_{1\le i\le r}$, and put
$$\gam_j=\sum_{i=1}^rc_{ij}\alp_i\quad (0\le j\le 2r-1).$$
Also, write
\begin{equation}\label{3.7}
K_l(P;C)=\oint |F_l(\gam_0)\cdots F_l(\gam_{2r-1})|\d\bfalp \quad (l=1,2).
\end{equation}
We divide the proof of the next theorem according to whether $r\ge 3$ 
or $r=2$.

\begin{theorem}\label{theorem3.3}
Suppose that $r\ge 2$, and that the $r\times 2r$ integral matrix $C$ is highly non-singular. 
Then $K_l(P;C)\ll P^{3r+\nu_l+\eps}$ $(l=1,2)$.
\end{theorem}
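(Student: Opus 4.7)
My plan is to establish the bound through the interplay between Cauchy--Schwarz, the iterative complification of Lemma \ref{lemma3.2}, and the central auxiliary estimate of Lemma \ref{lemma2.4}. For the principal case $r\ge 3$, I would start from
$$K_l(P;C)=\oint\prod_{j=0}^{2r-1}|F_l(\gam_j)|\,\d\bfalp,$$
designate two columns of $C$ (say those indexed by $i_1,i_2$) to carry the two $|F_l|$ factors, and partition the remaining $2r-2$ columns into two disjoint sets $S_1,S_2$ of size $r-1$ each. Using the pointwise identity $|F_l(\alp)|^2=|\phi_l(\alp)||\Phi_l(\alp)|$, one factors
$$\prod_{j=0}^{2r-1}|F_l(\gam_j)|=|F_l(\gam_{i_1})F_l(\gam_{i_2})|\prod_{j\notin\{i_1,i_2\}}|\phi_l(\gam_j)|^{1/2}|\Phi_l(\gam_j)|^{1/2},$$
and a Cauchy--Schwarz application (with $\phi_l$ on $S_1$ and $\Phi_l$ on $S_2$ in the first factor, and swapped in the second) yields $K_l(P;C)\le J_l(P;\Lam^{(1)})^{1/2}J_l(P;\Lam^{(2)})^{1/2}$, where $\Lam^{(1)},\Lam^{(2)}$ are the $r\times 2r$ matrices built from the columns of $C$ according to these role assignments.

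To make this useful, one must arrange that $\Lam^{(1)},\Lam^{(2)}$ are adjuvant matrices of type $(1,r)$. I would first invoke Lemma \ref{lemma4.1} to reduce, after an invertible change of variables, to the normal form $C=(I_r,M)$ with all square minors of $M$ non-singular; a judicious choice of $i_1,i_2$ and of the partition $S_1,S_2$, combined with further row operations, should then place each $\Lam^{(i)}$ in adjuvant form. This verification is the step I expect to be the main obstacle: the definition of adjuvant demands not only that the non-special columns permute to an auxiliary matrix of type $(0,r,r)_0$ (so the associated $M_0$ has all square minors non-singular), but that the analogous property hold for the flipped column vectors too, and confirming all this requires a careful combinatorial argument leveraging precisely the flexibility that the highly non-singular condition on $C$ provides.

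Next I would apply Lemma \ref{lemma3.2} iteratively $k$ times, obtaining
$$J_l(P;\Lam^{(i)})\ll(P^{3+\nu_l+\eps})^{1-1/2^k}J_l(P;\Lam_k^{(i)})^{1/2^k},$$
with $\Lam_k^{(i)}$ adjuvant of type $(2^k,r)$ and $R_k=2^k(r-1)$. To bound $J_l(P;\Lam_k^{(i)})$ uniformly in $k$, I would use that by the adjuvant definition, the special column $\bflam_{R_k+1}$ is proportional to a standard basis vector, and the remaining $2R_k+1$ columns form (after permutation) an auxiliary matrix $D_k^{(i)}$ of type $(2^k-1,r,r)_0$. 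Applying $|f_0|\le P$ to peel one factor of $f_0$ from each of the two $|F_l|$ positions and $|\phi_l(\bet_{R_k+1})|\le P^2$ at the special column sacrifices only $P^{O(1)}$, leaving an integrand dominated by that of $I_0(P;D_k^{(i)})$. Lemma \ref{lemma2.4} then yields $I_0(P;D_k^{(i)})\ll P^{3R_k+1+\eps}$, hence $J_l(P;\Lam_k^{(i)})\ll P^{3R_k+O(1)+\eps}$.

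Taking $k$ large enough in terms of $\eps$, the factor $(3R_k+O(1))/2^k$ collapses to $3(r-1)+\eps$ while $(P^{3+\nu_l+\eps})^{1-1/2^k}$ tends to $P^{3+\nu_l+\eps}$, giving $J_l(P;\Lam^{(i)})\ll P^{3r+\nu_l+\eps}$ and hence $K_l(P;C)\ll P^{3r+\nu_l+\eps}$ via the initial Cauchy--Schwarz. For the case $r=2$, Lemma \ref{lemma2.4} is unavailable as stated, so I would handle it either by invoking the modification for $r=2$ noted in the remark following its proof, or by a direct argument combining Cauchy--Schwarz with Lemma \ref{lemma3.1} and classical Hua-type moment bounds for cubic Weyl sums tailored to this low-dimensional setting.
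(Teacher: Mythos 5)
Your plan for $r\ge 3$ is essentially the paper's own argument, reorganised slightly. The paper also begins with a Cauchy--Schwarz split of $K_l(P;C)$ into two pieces of the shape $J_l$, does the change of variables you describe (Lemma~\ref{lemma4.1} reducing $C$ to $(\Del I_r,\Del(A^{-1}B)^T)$ so that all square minors of the second block are non-singular), and then iterates Lemma~\ref{lemma3.2} before closing with Lemma~\ref{lemma2.4}. The ``combinatorial obstacle'' you flag is indeed resolved exactly this way: after the reduction to $(\Del I_r,\Del M)$, placing the two $F_l$-factors at the first and $r$-th basis columns, the $r-1$ $\phi_l$-factors at the remaining basis columns together with one column of $M$, and the $r-1$ $\Phi_l$-factors at the remaining columns of $M$, yields (after the column re-indexing) an adjuvant matrix of type $(1,r)$; for the swapped factor one first permutes the columns of $C$ so that the new first $r$ columns are linearly independent, and repeats. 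Your peeling of $P^4$ (one $f_0$ from each of the two $F_l$'s, and the $\phi_l$ at the special column) and the arithmetic $(3+\nu_l)(1-2^{-k})+(3R_k+O(1))2^{-k}\to 3r+\nu_l$ match the paper, so this part is sound.

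The genuine gap is the case $r=2$, and both of the routes you gesture at are problematic. A ``direct argument combining Cauchy--Schwarz with Lemma~\ref{lemma3.1} and classical Hua-type moment bounds'' does not reach the required exponent. After using $|z_1z_2z_3z_4|\ll\sum|z_az_bz_c|^{4/3}$ one is led to bound
$$\oint|f_0(\gam_a)f_0(\gam_b)f_0(\gam_c)|^4\,\d\bfalp ,$$
and the natural Schwarz/Hua chain (splitting as $|f_0(\tet_1)^4f_0(\tet_2)^2|\cdot|f_0(\tet_2)^2f_0(c\tet_1+d\tet_2)^4|$, then using $\int|f_0|^8\ll P^{5+\eps}$ and $\int|f_0|^4\ll P^{2+\eps}$) gives only $P^{7+\eps}$, whereas $3r+\nu_1=6+\tfrac12$. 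Closing that half-power is precisely what makes this case non-trivial: the paper invokes \cite[Theorem~1.8]{KW2010} (a circle-method estimate for three linked cubic Weyl sums in two variables), and its $f_0$-analogue derived by substituting Hua's sixth-moment bound into the argument of that theorem, yielding $\Ome_{f_0}(P;C)\ll P^{6+\nu_1+\eps}$ and, after H\"older with \eqref{3.zz}, $\Ome_g^{2/3}\Ome_{f_0}^{1/3}\ll P^{6+\nu_2+\eps}$. Your other suggestion --- invoking the ``modification for $r=2$'' of Lemma~\ref{lemma2.4} --- relies on a remark the paper explicitly declines to substantiate (and which the paper deliberately avoids), so it cannot be cited as a proof. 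Without an input of the strength of \cite[Theorem~1.8]{KW2010}, the $r=2$ case remains open in your write-up.
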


\begin{proof}[Proof when $r\ge 3$] We again suppress mention of $l$ in our notation 
within this proof. We begin by applying Schwarz's inequality to $K(P;C)$, showing that
\begin{equation}\label{3.z1}
K(P;C)\le K^{(1)}(P;C)^{1/2}K^{(2)}(P;C)^{1/2},
\end{equation}
where
$$K^{(1)}(P;C)=\oint |F(\gam_0)\bfphi_{1,r-2}(\bfgam)F(\gam_{r-1})\phi(\gam_r)
\bfPhi_{r+1,2r-1}(\bfgam)|\d\bfalp $$
and
$$K^{(2)}(P;C)=\oint |F(\gam_0)\bfphi_{r+1,2r-2}(\bfgam)F(\gam_{r-1})
\phi(\gam_{2r-1})\Phi(\gam_r)\bfPhi_{1,r-2}(\bfgam)|\d\bfalp .$$
The coefficient matrix associated with the linear forms
$$\gam_0,\gam_{r+1},\ldots ,\gam_{2r-2},\gam_{r-1},\gam_{2r-1},\gam_r,\gam_1,
\ldots ,\gam_{r-2}$$
is obtained by permuting the columns of $C$, and hence is highly non-singular. We may 
therefore confine our attention to $K^{(1)}(P;C)$.\par

Write $C$ in block form $(A, B)$, where both $A$ and $B$ are $r\times r$ integral 
matrices, noting that the highly non-singular property of $C$ ensures that both $A$ and 
$B$ are non-singular. Note also that Lemma \ref{lemma4.1} shows that every square 
minor of $A^{-1}B$ is non-singular. It is convenient to put
$$\bfgam^{(1)}=(\gam_0,\ldots ,\gam_{r-1})^T\quad \text{and}\quad 
\bfgam^{(2)}=(\gam_r,\ldots ,\gam_{2r-1})^T.$$
We then have
$$\bfgam^{(1)}=A^T\bfalp\quad \text{and}\quad \bfgam^{(2)}=B^T\bfalp .$$ 
Let $\Del=|\!\det A|$. We substitute $\bftet=\Del^{-1}A^T\bfalp$, so that 
$$\bfgam^{(1)}=\Del\bftet\quad \text{and}\quad 
\bfgam^{(2)}=\Del(A^{-1}B)^T\bftet,$$
and then define the linear forms $\bet_j(\tet_1,\ldots ,\tet_r)\in \dbZ[\bftet]$ by means 
of the relation $\bfbet=\Del(A^{-1}B)^T\bftet$, in which 
$\bfbet=(\bet_{r+1},\ldots ,\bet_{2r})^T$. Since the underlying exponential sums are
periodic with period $1$, we may apply the transformation formula to conclude that
$$K^{(1)}(P;C)=\oint |F(\Del \tet_1)\bfphi_{2,r-1}(\Del \bftet)F(\Del \tet_r)
\phi(\bet_{r+1})\bfPhi_{r+2,2r}(\bfbet)|\d\bftet .$$
The matrix of coefficients of the linear forms defining this mean value, namely 
$$\Del\tet_1,\ldots ,\Del\tet_r,\bet_{r+1}(\bftet),\ldots ,\bet_{2r}(\bftet),$$
is given by $(\Del I_r, \Del(A^{-1}B)^T)$, which, in view of Lemma \ref{lemma4.1}, is 
highly non-singular. In particular, all the square minors of $\Del(A^{-1}B)^T$ are 
non-singular.\par

Define
$$T(P;C)=\int_0^1\biggl( \oint |F(\Del \tet_1)\bfphi_{2,r-1}(\Del\bftet) \phi(\bet_{r+1})
\bfPhi_{r+2,2r}(\bfbet)|\d\bftethat_{r-1}\biggr)^2\d\tet_r.$$
Then by Schwarz's inequality, one finds that
\begin{equation}\label{3.z2}
K^{(1)}(P;C)\le \biggl( \int_0^1|F(\Del \tet_r)|^2\d\tet_r\biggr)^{1/2}T(P;C)^{1/2}.
\end{equation}
By expanding the square in the definition of $T(P;C)$, we see that
$$T(P;C)=\oint |F(\bet_0^*)\bfphi_{1,2r-2}(\bfbet^*)F(\bet_{2r-1}^*)\bfPhi_{2r,4r-3}
(\bfbet^*)|\d\bftethat_{2r-1},$$
where $\bet_j^*=\bet_j^*(\bftet)$ is defined by
$$\bet_j^*=\begin{cases}
\Del \tet_{j+1},&\text{when $0\le j\le 2r-3$ and $j\ne r-1$,}\\
\bet_{r+1}(\tet_1,\ldots ,\tet_r),&\text{when $j=r-1$,}\\
\bet_{r+1}(\tet_{2r-1},\ldots ,\tet_r),&\text{when $j=2r-2$,}\\
\Del \tet_{2r-1},&\text{when $j=2r-1$,}\\
\bet_{j-r+2}(\tet_1,\ldots ,\tet_r),&\text{when $2r\le j\le 3r-2$,}\\
\bet_{5r-1-j}(\tet_{2r-1},\ldots ,\tet_r),&\text{when $3r-1\le j\le 4r-3$.}
\end{cases}$$
It is apparent that the matrix of coefficients $C'$ of the linear forms
$$\bet_0^*(\bftet),\ldots ,\bfbet_{4r-3}^*(\bftet)$$
is an integral adjuvant matrix of type $(2,r)$. Thus, in the notation introduced in 
(\ref{3.4bw}), we see that $T(P;C)=J(P;C')$. By virtue of the conclusion of Lemma 
\ref{lemma3.1}, we therefore infer from (\ref{3.z1}) and (\ref{3.z2}) that there exist 
integral adjuvant matrices $C^{(1)}$ and $C^{(2)}$ of type $(2,r)$ for which
\begin{align}
K(P;C)&\ll (P^{3+\nu+\eps})^{1/2}J(P;C^{(1)})^{1/4}J(P;C^{(2)})^{1/4}\notag \\
&\ll (P^{3+\nu+\eps})^{1/2}\max_{\iota=1,2}J(P;C^{(\iota)})^{1/2}.\label{3.z3}
\end{align}
By symmetry, there is no loss of generality in supposing that the maximum on the right 
hand side occurs with $\iota=1$.\par

We put $B_1=C^{(1)}$, and show by induction that for each natural number $m$, there 
exists an integral adjuvant matrix $B_m$ of type $(2^m,r)$ for which
\begin{equation}\label{3.8}
K(P;C)\ll (P^{3+\nu+\eps})^{1-2^{-m}}J(P;B_m)^{2^{-m}}.
\end{equation}
This bound holds when $m=1$ as a trivial consequence of (\ref{3.z3}). 
Suppose then that the estimate (\ref{3.8}) holds for $1\le m\le M$. By applying Lemma 
\ref{lemma3.2}, we see that there exists an integral adjuvant matrix $B_{M+1}$ of type 
$(2^{M+1},r)$ with
$$J(P;B_M)\ll (P^{3+\nu+\eps})^{1/2}J(P;B_{M+1})^{1/2}.$$
Substituting this estimate into the case $m=M$ of (\ref{3.8}), one confirms that the 
bound (\ref{3.8}) holds with $m=M+1$. The bound (\ref{3.8}) consequently follows for all 
$m$ by induction.\par

We now apply the bound just established. Let $\del$ be any small positive number, and 
choose $m$ large enough that $2^{1-m}(2-\nu)<\del$. We have shown that an integral 
adjuvant matrix $B_m=(b_{ij})$ of type $(2^m,r)$ exists for which (\ref{3.8}) holds. The 
matrix $B_m$ is of format $(R+1)\times (2R+2)$, where $R=2^m(r-1)$. In view of 
(\ref{3.4bw}), together with the trivial estimates $|f(\alp)|\le P$ and $|g(\alp)|\le P$, we 
find that
$$J(P;B_m)\ll P^4\oint |\bfphi_{0,R}(\bfbet)\bfPhi_{R+2,2R+1}(\bfbet)|\d
\bfalp.$$
The matrix of coefficients associated with a suitable permutation of the linear forms
$$\bet_0(\bfalp),\bet_1(\bfalp),\ldots ,\bet_R(\bfalp),\bet_{R+2}(\bfalp),\ldots ,
\bet_{2R+1}(\bfalp),$$
is auxiliary of type $(2^m-1,r,r)_0$. By orthogonality, a consideration of the underlying 
Diophantine equations shows that $J(P;B_m)\ll P^4I_0(P;B_m)$, and hence we deduce 
from Lemma \ref{lemma2.4} that
$$J(P;B_m)\ll P^4\left(P^{3(R+1)-2+\eps}\right)=P^{3(2^m(r-1))+5+\eps}.$$
By substituting the estimate just obtained into (\ref{3.8}), we conclude that
\begin{align*}
K(P;C)&\ll \left(P^{3+\nu+\eps}\right)^{1-2^{-m}}\left( P^{3(2^m(r-1))+5+\eps}
\right)^{2^{-m}}\\
&=P^{3r+\nu+(2-\nu)2^{-m}+\eps}.
\end{align*}
In view of our assumed upper bound $2^{1-m}(2-\nu)<\del$, one therefore finds that for 
each $\eps'>0$, one has $K(P;C)\ll P^{3r+\nu+\del/2+\eps'}$. The conclusion of the 
theorem follows on taking $\del=\eps$ and $\eps'=\tfrac{1}{2}\eps$.
\end{proof}

\begin{proof}[Proof when $r=2$] An application of the elementary inequality 
$|z_1\cdots z_n|\le |z_1|^n+\ldots +|z_n|^n$ yields
$$F_l(\gam_0)\cdots F_l(\gam_3)\ll \sum_{0\le a<b<c\le 3}
|F_l(\gam_a)F_l(\gam_b)F_l(\gam_c)|^{4/3},$$
and hence there exist integers $a$, $b$ and $c$ with $0\le a<b<c\le 3$ for which
\begin{equation}\label{3.z}
K_l(P;C)\ll \oint |F_l(\gam_a)F_l(\gam_b)F_l(\gam_c)|^{4/3}\d\bfalp .
\end{equation}
It is convenient to define
$$\Ome_h(P;C)=\oint |h(\gam_a)h(\gam_b)h(\gam_c)|^4\d\bfalp ,$$
with $h$ taken to be either $f_0$ or $g$. It is immediate from 
\cite[Theorem 1.8]{KW2010} that
\begin{equation}\label{3.zz}
\Ome_g(P;C)\ll P^{6+(6\nu_2-1)/4+\eps}.
\end{equation}
The argument of the proof of the latter theorem also readily yields the estimate 
$\Ome_{f_0}(P;C)\ll P^{6+\nu_1+\eps}$. In order to see this, one observes that the 
bound
$$\int_0^1|f_0(\alp)|^6\d\alp \ll P^{3+\nu_1+\eps},$$
stemming from Hua's lemma (see \cite[Lemma 2.5]{Vau1997}), can be substituted for 
the bound
$$\int_0^1|f_0(\alp)^2g(\alp)^4|\d\alp \ll P^{3+\nu_2+\eps}$$
underlying the proof of \cite[Theorem 1.8]{KW2010}. In this way, one finds as in 
\cite[equation (4.10)]{KW2010} that
$$\Ome_{f_0}(P;C)\ll P^{23/4+3\nu_1/2+\eps}=P^{6+\nu_1+\eps}.$$

\par In the above notation, when $l=1$, we now infer from the bound (\ref{3.z}) that
$$K_1(P;C)\ll \Ome_{f_0}(P;C)\ll P^{6+\nu_1+\eps}.$$
Also, applying H\"older's inequality to (\ref{3.z}), we obtain via (\ref{3.zz}) the estimate
$$K_2(P;C)\ll \Ome_{f_0}(P;C)^{1/3}\Ome_g(P;C)^{2/3}\ll P^{6+\nu_2+\eps},$$
on observing that
$$\frac{1}{3}\nu_1+\frac{2}{3}\left( \frac{6\nu_2-1}{4} \right)=\nu_2.$$
This completes the proof of Theorem \ref{theorem3.3} in the case $r=2$.
\end{proof}

\section{Correlation estimates} We apply Theorem \ref{theorem3.3} in this section to 
provide estimates for the correlation sums $\Xi_{s,\eta}(N;A;\bfh)$. By reference to 
(\ref{1.1}) and (\ref{1.2}), we see that when $A\in \dbZ^{r\times 2r}$ is a highly 
non-singular matrix, and $h_i\in \dbN\cup \{ 0\}$, then $\Xi_{2r,\eta}(N;A;\bfh)$ counts 
the number of integral solutions of the system
\begin{equation}\label{4.1}
X_j=\Lam_j(\bfn)\quad (1\le j\le 2r),
\end{equation}
with $\bfn\in \calP(N)$, in which $X_j=x_j^3+y_j^3+z_j^3-h_j$ and 
$x_j,y_j,z_j\in \dbN$, and none of the prime divisors of $y_jz_j$ exceed 
$(X_j+h_j)^{\eta/3}$. Since $X_j+h_j$ is no larger than $CN$, for a suitable positive 
constant $C$ depending at most on the coefficients of the $\Lam_j$, one sees that 
$x_j,y_j,z_j$ are each bounded above by $P=(CN)^{1/3}$.\par

The system (\ref{4.1}) may be written in the shape $A^T\bfn=\bfX$. It is convenient to 
consider a block matrix decomposition of $A$, say $A=(A_1,A_2)$ with $A_1$ and $A_2$ 
each $r\times r$ matrices, and also to write
$$\bfX=\left( \begin{matrix} \bfX_1\\ \bfX_2\end{matrix}\right) ,$$
with $\bfX_1$ and $\bfX_2$ each $r$-dimensional column vectors. Thus 
$\bfX_i=A_i^T\bfn$ for $i=1,2$. Since $A$ is highly non-singular, the matrices $A_1$ and 
$A_2$ are necessarily invertible, and we deduce that
$$(A_1^{-1})^T\bfX_1=\bfn=(A_2^{-1})^T\bfX_2.$$
Thus we find that $B'\bfX={\mathbf 0}$, where
$$B'=\left((A_1^{-1})^T,-(A_2^{-1})^T\right).$$
By applying Lemma \ref{lemma4.1}, one sees that the matrix $B'$ is highly non-singular if 
and only if $(A_1^{-1})^T$ and $(A_2^{-1})^T$ are non-singular, and all the square 
minors of $A_1^T(A_2^{-1})^T=(A_2^{-1}A_1)^T$ are non-singular. The non-singularity 
of $(A_1^{-1})^T$ and $(A_2^{-1})^T$ is immediate from that of $A_1$ and $A_2$. 
Likewise, the non-singularity of the square minors of $(A_2^{-1}A_1)^T$ is equivalent to 
the non-singularity of the square minors of $A_2^{-1}A_1$, which is a consequence of 
the highly non-singular nature of the block matrix $(A_2,A_1)$, again by Lemma 
\ref{lemma4.1}. We hence conclude that $B'$ is highly non-singular. Finally, we take 
$\lam$ to be the least natural number with the property that $\lam B'$ has integral 
entries, and define the matrix $B=(b_{ij})$ by putting $B=\lam B'$.\par
 
At this point, we have established that $\Xi_{2r,\eta}(N;A;\bfh)$ is bounded above by the 
number of solutions of the system of equations
$$\sum_{j=1}^{2r}b_{ij}(x_j^3+y_j^3+z_j^3)=H_i\quad (1\le i\le r),$$
with $1\le x_j\le P$ and $y_j,z_j\in \calA(P,P^\eta)$ $(1\le j\le 2r)$, in which
$$H_i=\sum_{j=1}^{2r}b_{ij}h_j.$$
Define
$$\bet_j=\sum_{i=1}^rb_{ij}\alp_i\quad (1\le j\le 2r).$$
Making use of the notation (\ref{3.1}) with $\sig$ taken implicitly to be $0$, it therefore 
follows from orthogonality that
$$\Xi_{2r,\eta}(N;A;\bfh)\le \oint F_l(\bet_1)\cdots F_l(\bet_{2r})
e(-\bfalp \cdot \bfH)\d\bfalp \quad (l=1,2).$$
We note here that one should view $\eta$ as being $1$ in the case $l=1$, and when 
$l=2$ view $\eta $ as being a positive number sufficiently small in terms of $\eps$. An 
application of the triangle inequality in conjunction with Theorem \ref{theorem3.3} 
consequently reveals that $\Xi_{2r,\eta}(N;A;\bfh)\ll P^{3r+\nu_l+\eps}$ $(l=1,2)$. 
Theorems \ref{theorem1.1} and \ref{theorem1.2} follow by reference to (\ref{3.2}), 
since one has $P=O(N^{1/3})$.

\section{Systems of linear equations} We turn now to the proof of Theorem 
\ref{theorem1.3}. Let $C=(c_{ij})$ denote an integral $r\times s$ highly non-singular 
matrix with $r\ge 2$ and $s\ge 2r+1$. We define
$$\gam_j(\bfalp)=\sum_{i=1}^rc_{ij}\alp_i\quad (1\le j\le s).$$
Let $N$ be a large positive number, and 
put $P=\tfrac{1}{2}N^{1/3}$. Let $\eta$ be a positive number sufficiently small in the 
context of Lemma \ref{lemma3.1}, and let $\sig$ be a positive number sufficiently small in 
terms of $C$ and $\eta$. Recalling (\ref{3.Y}), we put $\grf(\alp)=f_0(\alp)$ and 
$\grg(\alp)=g(\alp)$, and for the sake of concision write $\grg_j=\grg(\gam_j(\bfalp))$ 
and $\grf_j=\grf(\gam_j(\bfalp))$. When $\grB\subseteq [0,1)^r$ is measurable, we then 
define
$$\calN(P;\grB)=\int_\grB \prod_{j=1}^s\grf_j\grg_j^2\d\bfalp .$$
By orthogonality, it follows from this definition that $\calN(P;[0,1)^r)$ counts the number 
of integral solutions of the system
\begin{equation}\label{5.1}
\sum_{j=1}^sc_{ij}(x_j^3+y_j^3+z_j^3)=0\quad (1\le i\le r),
\end{equation}
with $\sig P<x_j,y_j,z_j\le P$ and $y_j,z_j\in \calA(P,P^\eta)$ $(1\le j\le s)$. Hence we 
find that $\calN(P;[0,1)^r)$ counts the solutions of the system (\ref{1.6}) with each 
solution $\bfn$ counted with weight $\rho_\eta(n_1;P)\cdots \rho_\eta(n_s;P)$, in which 
$\rho_\eta(n;P)$ denotes the number of integral solutions of the equation 
$n=x^3+y^3+z^3$, with $\sig P<x,y,z\le P$ and $y,z\in \calA(P,P^\eta)$. We aim to show 
that $\calN(P;[0,1)^r)\gg (P^3)^{s-r}$.\par

In pursuit of the above objective, we apply the Hardy-Littlewood method. Write 
$L=\log \log P$, denote by $\grN$ the union of the intervals
$$\grN(q,a)=\{\alp \in [0,1):|q\alp -a|\le LP^{-3}\},$$
with $0\le a\le q\le L$ and $(a,q)=1$, and put $\grn=[0,1)\setminus \grN$. Finally, we 
introduce a multi-dimensional set of arcs. Let $Q=L^{10r}$, and define the narrow set of 
major arcs $\grP$ to be the union of the boxes
$$\grP(q,\bfa)=\{ \bfalp\in [0,1)^r:|\alp_i-a_i/q|\le QP^{-3}\ (1\le i\le r)\},$$
with $0\le a_i\le q\le Q$ $(1\le i\le r)$ and $(a_1,\ldots ,a_r,q)=1$.

\begin{lemma}\label{lemma5.1} One has $\calN(P;\grP)\gg P^{3s-3r}$.
\end{lemma}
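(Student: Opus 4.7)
The plan is to carry out a standard pruning on the narrow arcs $\grP$. For $\bfalp \in \grP(q,\bfa)$, each form $\gam_j(\bfalp) = b_j/q + \bet_j'$ with $b_j = \sum_i c_{ij}a_i$ and $|\bet_j'| \ll QP^{-3}$, so that each $\gam_j(\bfalp)$ is uniformly located in a one-dimensional major arc with denominator at most $q \le Q$. I would then apply the standard approximations
\[
f_0(b/q+\bet) = q^{-1}S(q,b)v_0(\bet) + O(q^{1/2+\eps}(1+P^3|\bet|)^{1/2}),
\]
where $S(q,b) = \sum_{r=1}^q e(br^3/q)$ and $v_0(\bet) = \int_{\sig P}^P e(\bet \xi^3)\d\xi$, together with the analogous smooth-number approximation $g(b/q+\bet) = q^{-1}S(q,b)w(\bet) + O(\text{error})$, where $w(\bet)$ is the smooth integral proxy and the error is obtained by the methods of Chapter 4 of \cite{Vau1997} combined with the sieve infrastructure from \cite{Woo1995}. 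Substituting into the product $\prod_j \grf_j \grg_j^2$ and expanding, the error contributions are controlled by the elementary bounds $|S(q,b)| \ll q^{2/3}$ and $|v_0(\bet)|, |w(\bet)| \ll P(1+P^3|\bet|)^{-1/3}$, coupled with the highly non-singular structure of $C$ (which ensures that at most $r$ of the $\gam_j$ can collapse simultaneously). Provided $L$ grows slowly enough relative to $\log P$, all such errors amount to $o(P^{3s-3r})$.

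This reduction yields
\[
\calN(P;\grP) = c_\eta^{2s}\grS(Q)\grJ(Q) + o(P^{3s-3r}),
\]
with $c_\eta > 0$ accounting for the density of smooth numbers in $(\sig P, P]$, the truncated singular series
\[
\grS(Q) = \sum_{1\le q\le Q}\ \sum_{\substack{0\le a_i \le q\\ (a_1,\ldots,a_r,q)=1}} q^{-3s}\prod_{j=1}^s S\Bigl(q,\textstyle\sum_i c_{ij}a_i\Bigr)^3,
\]
and a truncated singular integral $\grJ(Q)$ over the box $|\bet_i| \le QP^{-3}$. A change of variables $\bet_i = P^{-3}\tet_i$ converts $\grJ(Q) = P^{3s-3r}J^*(Q)$, where $J^*(Q)$ extends over $|\tet_i|\le Q$ with integrand essentially $\prod_j V(\gam_j(\bftet))^3$ for $V(\gam) = \int_\sig^1 e(\gam \xi^3)\d\xi$, so it suffices to prove $\grS(Q) \gg 1$ and $J^*(Q) \gg 1$ as $Q \to \infty$.

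For the singular series, a standard Euler product factorisation $\grS = \prod_p \sig_p$ holds, with the large-prime bound $\sig_p = 1 + O(p^{-2})$ coming from the classical cubic Gauss sum estimate $|S(q,b)| \ll q^{2/3+\eps}(q,b)^{1/3}$. Positivity $\sig_p > 0$ at each prime follows by a Hensel-type lifting argument: the highly non-singular condition on $C$ supplies enough arithmetic flexibility that, combined with the fact that every residue class modulo $p^k$ is a sum of three cubes when $p \ne 3$ (and a direct computation when $p = 3$), non-singular solutions modulo $p^k$ of the system (\ref{5.1}) exist for every $k$. For the singular integral, the hypothesized positive real solution of (\ref{5.1}) provides, upon shrinking $\sig$ sufficiently in terms of the coefficients of $C$, an open real neighbourhood of solutions inside $(\sig P, P]^{3s}$ over which the implicit function theorem produces a positive $(3s-r)$-dimensional measure, realised as $J^*(\infty) > 0$ by Fourier inversion. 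The main obstacle is verifying that both the arithmetic obstructions vanish prime-by-prime (requiring careful use of highly non-singularity to lift partial solutions) and that the truncation errors $\grS - \grS(Q)$ and $J^*(\infty) - J^*(Q)$ are genuinely $o(1)$, so that the positivity at $Q = \infty$ transfers to $\grS(Q), J^*(Q) \gg 1$ for $Q$ large enough, concluding the proof.
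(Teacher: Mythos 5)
Your overall framework is the same as the paper's: approximate $\grf$ and $\grg$ on the narrow major arcs $\grP$, peel off a truncated singular series $\grS(Q)$ and singular integral $\grJ(Q)$, control the approximation error by the smallness of $Q$ (the paper takes $Q = L^{10r}$ with $L = \log\log P$ because the major-arc approximation for the smooth Weyl sum $g$ from \cite[Lemma 8.5]{Woo1991} only saves $(\log P)^{-1/2}$, which is the genuine bottleneck and forces the narrowness of $\grP$), and then establish $\grS(Q) \gg 1$ and $\grJ(Q) \gg P^{3s-3r}$. Your treatment of the singular integral via the given real solution and the implicit function theorem matches the paper's appeal to the argument of \cite{BW2014}, and your remark on convergence of $\grS(Q)$ is handled in the paper by adapting the Davenport--Lewis argument \cite[Lemma 23]{DL1969} to obtain $A(q)\ll q^{-1-1/(2r)}$.

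There is, however, a genuine gap in your positivity argument for the singular series. You invoke the claim that ``every residue class modulo $p^k$ is a sum of three cubes when $p\neq3$'' together with an appeal to ``arithmetic flexibility'' from highly non-singularity. This does not directly deliver a non-singular $p$-adic solution of the full system (\ref{5.1}): local solubility of each single equation $n_j\equiv x_j^3+y_j^3+z_j^3\pmod{p^k}$ does not yield simultaneous, compatible, and non-singular solutions of the $r$ linked equations, which is what Hensel lifting requires. The paper short-circuits this entirely by exhibiting the explicit global solution $(x_j,y_j,z_j)=(1,-1,0)$ for every $j$, so that each $x_j^3+y_j^3+z_j^3=0$ makes (\ref{5.1}) trivially true, and then observes that the Jacobian with respect to $x_1,\ldots,x_r$ equals $3^r\det(c_{ij})_{1\le i,j\le r}\ne 0$ by highly non-singularity; that single non-singular rational zero lifts $p$-adically for every prime at once, whereupon a modification of \cite[Lemma 31]{DL1969} gives $\grS>0$. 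You should replace your sums-of-three-cubes-mod-$p^k$ step with this (or an equivalent) explicit exhibition of a non-singular rational solution; without it the Hensel lifting has no seed.
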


\begin{proof} We begin by defining the auxiliary functions
$$S(q,a)=\sum_{r=1}^qe(ar^3/q)\quad \text{and}\quad v(\bet)=\int_{\sig P}^P
e(\bet \gam^3)\d\gam .$$
For $1\le j\le s$, put $S_j(q,\bfa)=S(q,\gam_j(\bfa))$ and 
$v_j(\bfbet)=v(\gam_j(\bfbet))$, and define
\begin{equation}\label{5.2}
A(q)=\underset{(q,a_1,\ldots ,a_r)=1}{\sum_{a_1=1}^q\cdots \sum_{a_r=1}^q}
q^{-3s}\prod_{j=1}^sS_j(q,\bfa)^3\quad \text{and}\quad 
V(\bfbet)=\prod_{j=1}^sv_j(\bfbet)^3.
\end{equation}
Finally, write $\calB(X)$ for $[-XP^{-3},XP^{-3}]^r$, and define
$$\grJ(X)=\int_{\calB(X)}V(\bfbet)\d\bfbet \quad \text{and}\quad \grS(X)=
\sum_{1\le q\le X}A(q).$$

\par We prove first that there exists a positive constant $\calC$ with the property that
\begin{equation}\label{5.3}
\calN(P;\grP)-\calC\grS(Q)\grJ(Q)\ll P^{3s-3r}L^{-1}.
\end{equation}
It follows from \cite[Lemma 8.5]{Woo1991} (see also \cite[Lemma 5.4]{Vau1989}) that 
there exists a positive constant $c=c(\eta)$ such that whenever 
$\bfalp \in \grP(q,\bfa)\subseteq \grP$, then
$$\grg(\gam_j(\bfalp))-cq^{-1}S_j(q,\bfa)v_j(\bfalp-\bfa/q)\ll P(\log P)^{-1/2}.$$
Under the same constraints on $\bfalp$, one finds from \cite[Theorem 4.1]{Vau1997} that
$$\grf(\gam_j(\bfalp))-q^{-1}S_j(q,\bfa)v_j(\bfalp-\bfa/q)\ll \log P.$$ 
Thus, whenever $\bfalp\in \grP(q,\bfa)\subseteq \grP$, one has
$$\prod_{j=1}^s\grf_j\grg_j^2-c^{2s}q^{-3s}\prod_{j=1}^s
S_j(q,\bfa)^3v_j(\bfalp-\bfa/q)^3\ll P^{3s}(\log P)^{-1/2}.$$
The measure of the major arcs $\grP$ is $O(Q^{2r+1}P^{-3r})$, so that on integrating 
over $\grP$, we confirm the relation (\ref{5.3}) with $\calC=c^{2s}$.\par

We next discuss the singular integral $\grJ(Q)$. By applying an argument paralleling that 
of \cite{BW2014} leading to equation (4.4) of that paper, one finds that
\begin{equation}\label{5.4}
\grJ(Q)\gg P^{3s-3r}.
\end{equation}
Here, we make use of the hypothesis that the system (\ref{1.6}) has a solution 
$\bfn\in (0,\infty)^s$, and 
hence also one with $\bfn\in (0,1)^s$. Thus, on taking $\sig$ sufficiently small, we ensure 
that a non-singular solution $\bfn$ of (\ref{1.6}) exists with $\bfn\in (2\sig,1)^s$.\par

We turn now to the singular series $\grS(Q)$. It follows from 
\cite[Theorem 4.2]{Vau1997} that whenever $(q,a)=1$, one has $S(q,a)\ll q^{2/3}$. 
Given a summand $\bfa$ in the formula for $A(q)$ provided in (\ref{5.2}), write 
$h_j=(q,\gam_j(\bfa))$. Then we find that
$$A(q)\ll q^{-s}\underset{(q,a_1,\ldots ,a_r)=1}{\sum_{a_1=1}^q\cdots 
\sum_{a_r=1}^q}h_1\cdots h_s.$$
By hypothesis, we have $s\ge 2r+1$. The proof of \cite[Lemma 23]{DL1969} is therefore 
easily modified to show that
$$A(q)\ll \underset{(u_1,\ldots ,u_r)\ll 1}{\sum_{u_1|q}\ldots \sum_{u_r|q}}
q^{r-s}(u_1\cdots u_r)^{s/r-1}\ll q^{r-s+(s-r)(1-1/r)+\eps}.$$
Thus, the series $\grS=\underset{X\rightarrow \infty}\lim \grS(X)$ is absolutely 
convergent and
$$\grS-\grS(Q)\ll \sum_{q>Q}q^{-1-1/(2r)}\ll Q^{-1/(2r)}\ll L^{-1}.$$

\par We observe in the next step that the system (\ref{5.1}) has a non-singular $p$-adic 
solution. For on taking $(x_j,y_j,z_j)=(1,-1,0)$ for each $j$, we solve (\ref{5.1}) with the 
Jacobian determinant
$$\det(3c_{ij}x_j^2)_{1\le i,j\le r}=3^r\det(c_{ij})_{1\le i,j\le r}$$
non-zero, since the first $r$ columns of $C$ are linearly independent. A modification of the 
proof of \cite[Lemma 31]{DL1969} therefore shows that $\grS>0$, whence 
$\grS(Q)=\grS+O(L^{-1})>0$. The proof of the lemma is completed by recalling 
(\ref{5.4}) and substituting into (\ref{5.3}) to obtain the lower bound
$$\calN(P;\grP)\gg P^{3s-3r}+O(P^{3s-3r}L^{-1}).$$
\end{proof}

Recall the definition of the major arcs $\grM(q,a)$ and their union $\grM$ from 
(\ref{2.X}). In order to prune a wide set of major arcs down to the narrow set $\grP$ just 
considered, we introduce the
 auxiliary sets of arcs
$$\grM_j=\{\bfalp \in [0,1)^r:\gam_j(\bfalp)\in \grM+\dbZ\},$$
and we put $\grV=\grM_1\cap\ldots \cap \grM_s$. In addition, we define 
$\grm_j=[0,1)^r\setminus \grM_j$ $(1\le j\le s)$, and write 
$\grv=[0,1)^r\setminus \grV$. Thus $\grv\subseteq \grm_1\cup \ldots \cup \grm_s$. We 
begin with an auxiliary lemma.

\begin{lemma}\label{lemma5.2} Let $\del$ be a fixed positive number. Then one has 
$$\int_\grM |\grf(\tet)\grg(\tet)^2|^{2+\del}\d\tet \ll P^{3+3\del}$$
and
$$\int_{\grM\setminus \grN}|\grf(\tet)\grg(\tet)^2|^{2+\del}\d\tet \ll 
P^{3+3\del}L^{-\del/6}.$$
\end{lemma}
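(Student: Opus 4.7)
The plan is to execute a standard Hardy--Littlewood analysis on each individual arc $\grM(q,a)\subseteq\grM$. Writing $\beta=\alpha-a/q$, I would invoke the major arc approximation $\grf(\alpha)=q^{-1}S(q,a)v(\beta)+O(q^{2/3+\eps})$ from \cite[Theorem 4.1]{Vau1997}, together with the analogous smooth Weyl sum approximation for $\grg$ already exploited in the proof of Lemma \ref{lemma5.1}, supplemented by the classical bounds $S(q,a)\ll q^{2/3}$ and $v(\beta)\ll P(1+P^3|\beta|)^{-1/3}$. Together these yield the common pointwise bound
$$|\grf(\alpha)|,\ |\grg(\alpha)|\ll q^{-1/3}P(1+P^3|\beta|)^{-1/3}$$
on $\grM(q,a)$, modulo error terms that I expect to be absorbed into the main estimate.

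The resulting bound raised to the appropriate power gives $|\grf\grg^2|^{2+\delta}\ll q^{-2-\delta}P^{6+3\delta}(1+P^3|\beta|)^{-(2+\delta)}$ on $\grM(q,a)$. Since $\delta>0$, the $\beta$-integral $\int_\dbR(1+P^3|\beta|)^{-(2+\delta)}\d\beta\ll P^{-3}$ converges, so each single arc contributes $\ll q^{-2-\delta}P^{3+3\delta}$. Summing over the residue classes $a\mmod{q}$ supplies a factor $q$, and the residual sum $\sum_{q\le P^{3/4}}q^{-1-\delta}\ll 1$ then yields the first stated estimate.

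For the second estimate, I would decompose $\grM\setminus \grN$ into the subset where $q>L$ and the subset where $q\le L$ but $|\beta|>LP^{-3}$. In the former, the tail $\sum_{q>L}q^{-1-\delta}\ll L^{-\delta}$ supplies the saving. In the latter, the truncated integral $\int_{|\beta|>LP^{-3}}(1+P^3|\beta|)^{-(2+\delta)}\d\beta\ll L^{-1-\delta}P^{-3}$ supplies the substantially stronger saving $L^{-1-\delta}$, preserved under summation over $q\le L$. Since $L^{-\delta}\le L^{-\delta/6}$ whenever $L\ge 1$, the stated bound $\ll L^{-\delta/6}P^{3+3\delta}$ follows with room to spare.

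The principal technical obstacle will be to verify that the error terms in the major-arc approximations do not spoil this scheme. In particular, the $O(P(\log P)^{-1/2})$ approximation error for $\grg$ invoked in Lemma \ref{lemma5.1} may exceed the main term $q^{-1/3}P(1+P^3|\beta|)^{-1/3}$ once $q$ is close to $P^{3/4}$, so one must instead appeal to a sharper pointwise bound $|\grg(\alpha)|\ll q^{-1/3}P(1+P^3|\beta|)^{-1/3}$ valid throughout $\grM$ when $\eta$ is small enough, as follows from standard estimates for smooth Weyl sums (cf.\ \cite[Lemma 8.5]{Woo1991}). With this uniform pointwise bound in hand, the main-term computation above is immediate.
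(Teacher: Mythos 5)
Your route is genuinely different from the paper's. The paper does not compute pointwise bounds at all: it invokes the mean value bound $\int_\grM |\grf(\tet)|^{2+\del}|\grg(\tet)|^2\d\tet\ll P^{1+\del}$ (a special case of \cite[Lemma 9]{BW2007}), which isolates exactly two powers of the smooth sum $\grg$, and then disposes of the remaining $2+2\del$ powers of $\grg$ by the trivial bound $|\grg|\le P$. The second bound follows likewise after peeling off $|\grf|^{\del/2}$ via the pointwise estimate $\sup_{\grM\setminus\grN}|\grf|\ll PL^{-1/3}$. The advantage of this arrangement is that a strong \emph{pointwise} control on the smooth Weyl sum $\grg$ is never required anywhere in the argument.

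This is precisely where your proposal has a genuine gap. The pointwise bound $|\grf(\alpha)|\ll q^{-1/3}P(1+P^3|\beta|)^{-1/3}$ is indeed valid throughout $\grM$: Vaughan's Theorem 4.1 gives an error $O\bigl(q^{1/2+\eps}(1+P^3|\beta|)^{1/2}\bigr)$, which on $\grM$ satisfies $q(1+P^3|\beta|)\ll P^{3/4}$ and is therefore dominated by the main term. But the analogous assertion you then make for $\grg$ --- that a sharper pointwise bound $|\grg(\alpha)|\ll q^{-1/3}P(1+P^3|\beta|)^{-1/3}$ ``valid throughout $\grM$'' follows from standard smooth Weyl sum estimates, citing \cite[Lemma 8.5]{Woo1991} --- is not correct. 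Lemma 8.5 of \cite{Woo1991} (like \cite[Lemma 5.4]{Vau1989}) is an approximation with an error term of order $P(\log P)^{-c}$; once $q(1+P^3|\beta|)$ exceeds a small power of $\log P$, that error term swamps the main term $q^{-1/3}P(1+P^3|\beta|)^{-1/3}$, and no uniform pointwise bound of the required strength for smooth Weyl sums on the wide arcs $q\le P^{3/4}$ is available in the literature. Moreover your scheme really needs it: if you only use the pointwise bound from $\grf$ and the trivial $|\grg|\le P$, the contribution per $q$ grows like a positive power of $q$ after summing over the $a$, and the $q$-sum up to $P^{3/4}$ diverges badly. The necessity of $q^{-1/3}$-type decay from \emph{each} of the three factors $\grf,\grg,\grg$ is exactly the obstruction that the paper's mean-value route is designed to sidestep, by keeping only $|\grg|^2$ inside the $L^1$-average and spending its cancellation via orthogonality rather than pointwise.
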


\begin{proof} On applying a special case of \cite[Lemma 9]{BW2007}, we obtain the 
bound
$$\int_\grM |\grf(\tet)|^{2+\del}|\grg(\tet)|^2\d\tet \ll P^{1+\del},$$
and so the first conclusion follows on making use of a trivial estimate for $\grg(\tet)$. For 
the second inequality, one observes that the methods of \cite[Chapter 4]{Vau1997} show 
that
$$\sup_{\alp\in \grM\setminus \grN}|\grf(\tet)|\ll PL^{-1/3}.$$
Thus, on making use also of a trivial estimate for $\grg(\tet)$, one obtains in like manner 
the bound
\begin{align*}
\int_{\grM\setminus \grN}|\grf(\tet)\grg(\tet)^2|^{2+\del}\d\tet &\ll 
(PL^{-1/3})^{\del/2}P^{2+2\del}\int_{\grM\setminus \grN}|\grf(\tet)|^{2+\del/2}
|\grg(\tet)|^2\d\tet \\
&\ll (PL^{-1/3})^{\del/2}P^{3+5\del/2}.
\end{align*}
This completes the proof of the lemma.
\end{proof}

\begin{lemma}\label{lemma5.3}
One has $\calN(P;\grV\setminus \grP)\ll P^{3s-3r}(\log L)^{-1}$.
\end{lemma}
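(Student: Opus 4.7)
My plan is a two-stage pruning: first reduce $\grV\setminus\grP$ to a union of pieces on which some individual form is confined to $\grM\setminus\grN$, then bound each piece via a Weyl-type sup estimate coupled with a dimension-matched Cauchy--Schwarz inequality exploiting the highly non-singular structure of $C$. For the reduction I would show
$$\grV\setminus\grP\subseteq\bigcup_{j=1}^s\grW_j,\qquad\grW_j=\grV\cap\{\bfalp\in[0,1)^r:\gam_j(\bfalp)\in(\grM\setminus\grN)+\dbZ\}.$$
Suppose $\bfalp\in\grV$ satisfies $\gam_j(\bfalp)\in\grN+\dbZ$ for every $j$; then for each $j$ there exist $(b_j,r_j)=1$ with $r_j\le L$ and $\gam_j(\bfalp)\equiv b_j/r_j+\theta_j\pmod 1$, $|\theta_j|\le LP^{-3}$. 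Taking any invertible $r\times r$ submatrix $C_0$ of $C$ (available because $C$ is highly non-singular) and inverting, each $\alp_i$ admits a rational approximation with denominator at most $L^r|\det C_0|$ and error of order $LP^{-3}$, up to a constant depending only on $C$. For $P$ sufficiently large these fit comfortably inside $Q=L^{10r}$ and $QP^{-3}$, so $\bfalp\in\grP$; the contrapositive yields the inclusion.

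On each $\grW_j$ I would invoke the Weyl-type estimate $|\grf_j(\bfalp)|=|\grf(\gam_j(\bfalp))|\ll PL^{-1/3}$ recorded in the proof of Lemma \ref{lemma5.2}, combined with $|\grg_j(\bfalp)|^2\le P^2$, to obtain
$$\calN(P;\grW_j)\ll P^3L^{-1/3}\int_\grV\prod_{k\ne j}|\grf_k\grg_k^2|\d\bfalp.$$
Since $s\ge 2r+1$, the surplus $s-1-2r\ge 0$ factors can be bounded trivially by $P^3$ apiece, after which I would select disjoint $r$-element subsets $I_1,I_2\subseteq\{1,\ldots,s\}\setminus\{j\}$ and apply Cauchy--Schwarz to the remaining $2r$-fold product. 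Because $|I_i|=r$ matches the dimension of integration, the high non-singularity of $C$ permits a change of variables making the forms $\gam_k$ ($k\in I_i$) into new coordinates, whereupon $\prod_{k\in I_i}|\grf_k\grg_k^2|^2$ factorises into single-variable pieces each constrained to $\grM\bmod\dbZ$ by the definition of $\grV$. Lemma \ref{lemma5.2} with $\del=0$ supplies $\int_\grM|\grf(\tet)\grg(\tet)^2|^2\d\tet\ll P^3$, so each Cauchy--Schwarz factor is $\ll P^{3r/2}$, whence
$$\int_\grV\prod_{k\ne j}|\grf_k\grg_k^2|\d\bfalp\ll P^{3(s-1-2r)}\cdot P^{3r}=P^{3s-3r-3}.$$
Combining the pieces gives $\calN(P;\grW_j)\ll P^{3s-3r}L^{-1/3}$, and summing over $j$ together with $L^{-1/3}=(\log\log P)^{-1/3}\ll(\log L)^{-1}$ completes the argument.

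The main obstacle I anticipate is the temptation to apply H\"older symmetrically with equal exponents $s$, which returns only the wasteful bound $P^{3s-3}$ because it treats each $\gam_j$ as an independent one-variable object and disregards the $r$-dimensional integration. Splitting the central $2r$ factors into \emph{exactly} two groups of size $r$ and using Cauchy--Schwarz is what couples cleanly with the hypothesis that any $r$ columns of $C$ form an invertible block, recovering the missing factor $P^{-3(r-1)}$ and yielding the correct main-term exponent $P^{3s-3r}$.
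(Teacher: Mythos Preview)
Your argument is correct and follows a genuinely different route from the paper. The paper first proves the sharper structural fact that on $\grV\setminus\grP$ at most $r-1$ of the forms $\gam_j$ can lie in $\grN+\dbZ$, and then applies H\"older's inequality symmetrically over all $r$-element subsets $\bfnu$ of $\{1,\ldots,s\}$, producing factors $I(\bfnu)=\int_{\grV\setminus\grP}\prod_{j\in\bfnu}|\grf_j\grg_j^2|^{s/r}\d\bfalp$; after a change of variables each of these is bounded by $P^{3s-3r}$ via Lemma~\ref{lemma5.2} with $\del=s/r-2>0$, and at least one picks up an additional factor $L^{-\del/6}$ from the second estimate of that lemma, giving the final saving $L^{-1/(6H)}$ with $H=\binom{s}{r}$. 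You instead extract the saving up front as a pointwise sup bound on a single $\grf_j$, discard the surplus $s-1-2r$ factors trivially, and handle the remaining $2r$ factors by Cauchy--Schwarz together with an $r$-dimensional change of variables. This is more direct and in fact yields the quantitatively stronger saving $L^{-1/3}$; the paper's symmetric H\"older device, by contrast, uses every form nontrivially and automatically locates a good $r$-subset without a preliminary union bound. One small point: you invoke Lemma~\ref{lemma5.2} with $\del=0$, which lies outside the stated hypothesis $\del>0$; the $\del=0$ case nonetheless follows by the same proof (the input from \cite{BW2007} covers it), so this is harmless.
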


\begin{proof} Let $\bfalp\in \grV\setminus \grP$, and suppose temporarily that 
$\gam_{j_m}\in \grN+\dbZ$ for $r$ distinct indices $j_m\in [1,s]$. For each $m$ there is 
a natural number $q_m\le L$ having the property that $\|q_m\gam_{j_m}\|\le LP^{-3}$. 
With $q=q_1\cdots q_r$, one has $q\le L^r$ and $\|q\gam_{j_m}\|\le L^rP^{-3}$. Next 
eliminating between $\gam_{j_1},\ldots ,\gam_{j_r}$ in order to isolate 
$\alp_1,\ldots ,\alp_r$, one finds that there is a positive integer $\kap$, depending at 
most on $(c_{ij})$, such that $\|\kap q\alp_l\|\le L^{r+1}P^{-3}$ $(1\le l\le r)$. Since 
$\kap q\le L^{r+1}$, it follows that $\bfalp\in \grP$, yielding a contradiction to our 
hypothesis that $\bfalp\in \grV\setminus \grP$. Thus $\gam_\nu(\bfalp)\in \grn+\dbZ$ for 
at least $s-r\ge r+1$ of the suffices $\nu$ with $1\le \nu \le s$. Let $\calH$ denote the 
set of all $r$ element subsets of $\{1,2,\ldots ,s\}$, and put $H=\text{card}(\calH)$. 
Then by H\"older's inequality, we find that
\begin{equation}\label{5.5}
\calN(P;\grV\setminus \grP)\le \prod_{\bfnu\in \calH}I(\bfnu)^{1/H},
\end{equation}
where
$$I(\bfnu)=\int_{\grV\setminus \grP}\prod_{j=1}^r
|\grf_{\nu_j}\grg_{\nu_j}^2|^{s/r}\d\bfalp .$$

\par When $\bfnu\in \calH$, one finds by a change of variable that
$$I(\bfnu)\le \int_{\grM^r}\prod_{j=1}^r|\grf(\bet_j)\grg(\bet_j)^2|^{s/r}\d\bfbet ,$$
so that Lemma \ref{lemma5.2} shows that $I(\bfnu)\ll P^{3s-3r}$. Further, since there 
exists some 
$\bfnu\in \calH$ such that $\gam_{\nu_j}(\bfalp)\in \grn+\dbZ$ for $1\le j\le r$, one finds 
for this subset that one has the bound
$$I(\bfnu)\le \int_{(\grM\setminus \grN)^r}\prod_{j=1}^r
|\grf(\bet_j)\grg(\bet_j)^2|^{s/r}\d\bfbet \ll P^{3s-3r}L^{-1/6}.$$
Thus we conclude from (\ref{5.5}) that
$$\calN(P;\grV\setminus \grP)\ll P^{3s-3r}L^{-1/(6H)},$$
and the conclusion of the lemma follows.
\end{proof}

Our final task in the application of the Hardy-Littlewood method is the analysis of the 
minor arcs $\grv$.

\begin{lemma}\label{lemma5.4} There is a positive number $\del$ such that 
$\calN(P;\grv)\ll P^{3s-3r-\del}$.
\end{lemma}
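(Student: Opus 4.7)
My plan is to decompose $\grv$ into the union $\grm_1\cup\cdots\cup\grm_s$, to bound $\calN(P;\grm_j)$ for each $j$ by combining a pointwise Weyl-type estimate on the single factor $\grf(\gam_j)$ with a mean-value estimate for the remaining product, and finally to verify that the resulting exponent comes out strictly below $3s-3r$.

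For each $j$, whenever $\bfalp\in\grm_j$ the linear form $\gam_j(\bfalp)$ lies in $\grm+\dbZ$, so the enhanced Weyl inequality underlying (\ref{2.12}) (applied to $f_0$ rather than to $f$) yields
$$\sup_{\bfalp\in\grm_j}|\grf(\gam_j(\bfalp))|\ll P^{3/4+\eps}.$$
Pulling this factor outside the integral and bounding $|\grg(\gam_j)|^2$ trivially by $P^2$ reduces the problem to controlling the residual mean value
$$\int_{[0,1)^r}\prod_{k\neq j}|\grf(\gam_k)\grg(\gam_k)^2|\,\d\bfalp .$$
The hypothesis $s\ge 2r+1$ becomes crucial at this point: I would choose any subset $S\subseteq\{1,\ldots,s\}\setminus\{j\}$ with $|S|=2r$, apply the trivial bound $|\grf(\gam_k)\grg(\gam_k)^2|\le P^3$ to the $s-1-2r$ indices $k\notin S\cup\{j\}$, and then invoke Theorem \ref{theorem3.3} with $l=2$ on the surviving $2r$-fold product. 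The $r\times 2r$ submatrix of $C$ formed by the columns indexed by $S$ inherits high non-singularity from $C$, so the theorem applies and contributes $O(P^{3r+\nu_2+\eps})$ for that piece. Multiplying everything through gives
$$\calN(P;\grm_j)\ll P^{3/4+\eps}\cdot P^2\cdot P^{3(s-1-2r)}\cdot P^{3r+\nu_2+\eps}=P^{3s-3r-1/4+\nu_2+2\eps}.$$

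Summing over the $s$ choices of $j$ preserves this exponent up to an absolute constant, so the argument reduces to the arithmetic inequality $\nu_2<\tfrac14$. By (\ref{3.2}) this is $(\sqrt{2833}-43)/41<\tfrac14$, equivalent to $\sqrt{2833}<53.25$ and hence to $2833<2835.5625$, which is plain. Choosing $\eps<\tfrac12(\tfrac14-\nu_2)$ then furnishes a positive $\del$ as required. The main obstacle in this plan is precisely that the $1/4$ saving extracted from Weyl's inequality is only barely larger than $\nu_2$: the argument would collapse without the sharpness of the smooth-number mean value (\ref{1.Z}) used to define $\nu_2$, and invoking Hua's lemma alone (corresponding to $\nu_1=\tfrac12$) would produce no saving whatsoever.
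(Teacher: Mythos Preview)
Your proof is correct and follows essentially the same route as the paper: decompose $\grv$ into the $\grm_j$, apply the enhanced Weyl bound to $\grf(\gam_j)$, estimate $|\grg(\gam_j)|^2$ and the surplus $s-1-2r$ factors trivially, and invoke Theorem~\ref{theorem3.3} with $l=2$ on a highly non-singular $r\times 2r$ submatrix to obtain the exponent $3s-3r-\tfrac14+\nu_2+2\eps$, which suffices since $\nu_2<\tfrac14$. The paper's argument differs only cosmetically (it fixes $j=s$ and uses the first $2r$ columns rather than an arbitrary subset $S$).
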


\begin{proof} Since $\grv\subseteq \grm_1\cup \ldots \cup \grm_s$, the conclusion of the 
lemma follows by showing that $\calN(P;\grm_j)\ll P^{3s-3r-\del}$ for $1\le j\le s$. By 
symmetry, moreover, we may restrict attention to the case $j=s$. Suppose then that 
$\gam_s(\bfalp)\in \grm+\dbZ$. Observe that the matrix $C$ is highly non-singular, and 
thus the matrix $C'$, in which the final $s-2r$ columns of $C$ are deleted, is also highly 
non-singular. Then it follows from (\ref{3.7}) and Theorem \ref{theorem3.3} that
$$\oint \prod_{j=1}^{2r}|\grf_j\grg_j^2|\d\bfalp \ll P^{3r+\nu_2+\eps}.$$
Observe that by Weyl's inequality (see \cite[Lemma 1]{Vau1986}), one has
$$\sup_{\gam_s(\bfalp)\in \grm+\dbZ}|\grf(\gam_s(\bfalp))|\ll P^{3/4+\eps}.$$
Hence, by employing trivial estimates for $\grf_j$ and $\grg_j$ as necessary, one obtains 
the bound
\begin{align*}
\calN(P;\grm_s)&\le P^{3s-6r-1}\biggl( \sup_{\gam_s(\bfalp)\in \grm+\dbZ}|
\grf(\gam_s(\bfalp))|\biggr) \oint \prod_{j=1}^{2r}|\grf_j\grg_j^2|\d\bfalp \\
&\ll P^{3s-3r+\nu_2-1/4+\eps}.
\end{align*}
From (\ref{3.2}), we have $\nu_2<1/4$, and so the conclusion of the lemma now follows.
\end{proof}

By combining the conclusions of Lemmata \ref{lemma5.1}, \ref{lemma5.3} and 
\ref{lemma5.4}, we conclude that
\begin{equation}\label{5.6}
\calN(P)=\calN(P;\grP)+\calN(P;\grV\setminus \grP)+\calN(P;\grv)\gg P^{3s-3r}.
\end{equation}

\par Our final task is to remove the multiplicity of representations implicit in the definition 
of $\rho_\eta(n;P)$. Note that $\rho_\eta(n;P)\le \rho_\eta(n)$ for each $n\in \dbN$. It is 
useful to introduce the set
$$\calS_\tet(N)=\{ 1\le n\le N:\rho_\eta(n)>N^\tet\} .$$

\begin{lemma}\label{lemma5.5}
One has
$$\sum_{n\in \calS_\tet(N)}\rho_\eta(n)\ll N^{1+\xi-\tet+\eps}.$$
\end{lemma}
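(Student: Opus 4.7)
The plan is to obtain this bound by a direct dyadic/second-moment trick, using the mean square estimate (\ref{1.Z}) as the sole analytic input. The idea is that on the exceptional set $\calS_\tet(N)$ one has $\rho_\eta(n)>N^\tet$, so dividing by $N^\tet$ gives a quantity that is at least $1$, which can be used to weight the sum.

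Concretely, I would write
$$\sum_{n\in \calS_\tet(N)}\rho_\eta(n)\le \sum_{n\in \calS_\tet(N)}\rho_\eta(n)\cdot \frac{\rho_\eta(n)}{N^\tet}\le N^{-\tet}\sum_{1\le n\le N}\rho_\eta(n)^2,$$
where in the first step I use that $\rho_\eta(n)/N^\tet>1$ on $\calS_\tet(N)$, and in the second I drop the restriction to the exceptional set and extend back to all $n\le N$ (legitimate as $\rho_\eta(n)\ge 0$). Invoking the bound (\ref{1.Z}), which holds for suitably small $\eta>0$, the right-hand side is $\ll N^{-\tet}N^{1+\xi+\eps}=N^{1+\xi-\tet+\eps}$, which is the desired conclusion.

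There is essentially no obstacle here; the lemma is a standard second-moment pigeonhole deduction. The only care needed is to ensure that the parameter $\eta$ is chosen sufficiently small for (\ref{1.Z}) to apply, and this has already been fixed earlier in the paper. No further harmonic analysis or Hardy--Littlewood input is required.
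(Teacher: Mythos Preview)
Your proof is correct and essentially identical to the paper's own argument: the paper also bounds $\sum_{n\in \calS_\tet(N)}\rho_\eta(n)<N^{-\tet}\sum_{n\in \calS_\tet(N)}\rho_\eta(n)^2$ and then invokes (\ref{1.Z}).
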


\begin{proof} In view of (\ref{1.Z}), one has
$$\sum_{n\in \calS_\tet(N)}\rho_\eta(n)<N^{-\tet}\sum_{n\in \calS_\tet(N)}
\rho_\eta(n)^2\ll N^{1+\xi-\tet+\eps},$$
and the conclusion of the lemma follows. 
\end{proof}

Let $\del$ be a positive number, and consider the number $Y_1$ of solutions of the 
system (\ref{5.1}) in which one has $\rho_\eta (x_j^3+y_j^3+z_j^3)>N^{2\xi+\del}$ 
for some index $j$ with $1\le j\le s$. Without loss of generality, one may assume that 
$j=s$. Then by orthogonality, one has
$$Y_1\ll \sum_{n\in \calS_{2\xi+\del}(N)}\rho_\eta(n)\oint \biggl( 
\prod_{j=1}^{s-1}\grf_j\grg_j^2\biggr)e(n\gam_s(\bfalp))\d\bfalp .$$
By the triangle inequality, Theorem \ref{theorem3.3} and Lemma \ref{lemma5.5}, we 
thus deduce that
\begin{align*}
Y_1&\ll N^{s-r-1+\xi+\eps}\sum_{n\in \calS_{2\xi+\del}(N)}\rho_\eta(n)\\
&\ll N^{s-r+2\xi-(2\xi+\del)+2\eps}\ll N^{s-r-\del/2}.
\end{align*}
Let $Y_0$ denote the contribution to $\calN(P)$ arising from those solutions of (\ref{5.1}) 
in which $\rho_\eta (x_j^3+y_j^3+z_j^3)\le N^{2\xi+\del}$ for all $j$. Then it follows 
from (\ref{5.6}) that
$$Y_0\gg N^{s-r}+O(N^{s-r-\del/2})\gg N^{s-r}.$$
Since $Y_0$ counts solutions $\bfn$ of (\ref{1.6}), with each solution counted with weight 
at most $\rho_\eta(n_1)\cdots \rho_\eta(n_s)\le (N^{2\xi+\del})^s$, we conclude that 
$\Ups(N)\gg N^{s-r}(N^{2\xi+\del})^{-s}$. As $\del$ may be chosen arbitrarily small, 
though positive, this completes the proof of Theorem \ref{theorem1.3}.

\bibliographystyle{amsbracket}
\providecommand{\bysame}{\leavevmode\hbox to3em{\hrulefill}\thinspace}

\end{document}